\numberwithin{equation}{section}
 \def\Hom{\mbox{\rm Hom}} \def\dim{\mbox{\rm dim}\,} 
    \def\mod{\mbox{\rm \textbf{mod}}\,}
\def\cocone{\mbox{\rm cocone}}
\def\B{\mathcal {B}}\def\C{\mathcal {C}}
\def\A{\mathcal{A}} 
\def\Id{\mbox{\rm Id}\,} \def\Im{\mbox{\rm Im}\,}
\def\C{\mathcal{C}}
\def\E{\mathbb{E}}
\def\s{\mathfrak{s}}
\def\Ab{\mathit{Ab}}
\def\del{\delta}
\newcommand{\bsm}{\begin{smallmatrix}}
\newcommand{\esm}{\end{smallmatrix}} 
\newtheorem{theorem}{Theorem}[section]
\newtheorem{definition}[theorem]{Definition}
\newtheorem{remark}[theorem]{Remark}
\newtheorem{lemma}[theorem]{Lemma}
\newtheorem{example}[theorem]{Example}
\newtheorem{corollary}[theorem]{Corollary}
\newtheorem{condition}[theorem]{Condition}
\newtheorem{theorem*}{Theorem}
\newcommand{\add}{\operatorname{add}}
\newcommand{\gl}{\operatorname{gl.dim}}
\newcommand{\pd}{\operatorname{pd}}
\newcommand{\Ker}{\operatorname{Ker}}
\newcommand{\Coker}{\operatorname{Coker}}
\newcommand{\RNum}[1]{\uppercase\expandafter{\romannumeral #1\relax}}
\title{ \bf Homological Dimensions of Extriangulated Categories and Recollements \thanks{2010 Mathematics Subject Classification: 18E05, 18G20.}
\thanks{Keywords: extriangulated category, recollement, global dimension, extension dimension.
 }}
\author{ Weili Gu$^1$, Xin Ma$^{1,}$\thanks{Corresponding author. maxin0719@126.com.}, Lingling Tan$^2$
\\
{\it \footnotesize 1. College of Science, Henan University of Engineering, Zhengzhou 451191, P.R. China}
\\
{\it \footnotesize 2. School of Mathematical Sciences, Qufu Normal University, Qufu 273165, P.R. China}
\\
}
\date{ }
\begin{document}

\baselineskip=16pt
\maketitle

\begin{abstract}
In this paper, let $(\mathcal{A},\mathcal{B},\mathcal{C})$ be a recollement of extriangulated categories.
We introduce the global dimension and extension dimension of extriangulated categories,
and  give some upper bounds of global dimensions (resp. extension dimensions)
of the categories involved in $(\mathcal{A},\mathcal{B},\mathcal{C})$,
which give a simultaneous generalization of some results in the recollement of abelian categories and triangulated categories.
\end{abstract}

\pagestyle{myheadings}
\markboth{\rightline {\scriptsize W. Gu, X. Ma, L. Tan }}
{\leftline{\scriptsize Homological Dimensions of Extriangulated Categories and Recollements }}



\section{Introduction}
Abelian categories and triangulated categories are two fundamental structures in algebra and geometry.
 Recently, Nakaoka
and Palu \cite{Na} introduced the notion of extriangulated categories which is extracting properties on triangulated categories and exact categories (in particular, abelian categories).
There exist extriangulated categories which are not exact
nor triangulated, many examples can be founded in \cite{HZZ20P,INY18A, Na}.
Recollements of triangulated categories and abelian categories were introduced by Be{\u\i}linson, Bernstein and Deligne \cite{BBD},
which play an
important role in algebraic geometry and representation theory, for instance \cite{BBD,BARI07H,CEPBSL86D,CEPBSL88F,KNJ94G}.
They are closely related each other, and possess similar properties in many aspects.
In order to give a simultaneous generalization of recollements of abelian categories and triangulated categories,
Wang, Wei and Zhang \cite{WWZ20R} introduced the notion of recollements of extriangulated categories.

Homological dimension is an important invariant in representation theory, which plays an important role in the study of triangulated categories and abelian categories
(see for instance \cite{BA08S,B00R,DHL,HYG,PYY,PC14H,RR08D,ZHJ,ZJL}).
In the situation of recollements, Psaroudakis \cite{PC14H} studied the global dimension, finitistic dimension, representation dimension in a recollement of abelian categories;
Hu and Yao \cite{HYG} studied relative homological dimension in a recollement of triangulated categories with respect to proper classes of triangles;
Zheng, Ma and Huang \cite{ZJL} studied the extension dimension in a recollment of abelian categories;
Zhang and Zhu \cite{ZHJ} studied the Gorenstein global dimension in a recollement of abelian categories; etc..

In this paper, we introduce the notions of global dimension and extension dimension in extriangulated categories and investigate their behaviors in a recollement of extriangulated categories.

The paper is organized as follows:

In Section 2, we summarize some basic definitions and properties of extriangulated categories, which will be used in this sequel.

In Section 3, we introduce the notion of global dimension of extriangulated categories,  which gives a simultaneous generalization of $\xi$-global dimension of triangulated categories with respect to a proper class $\xi$ of triangles (see \cite{B00R}) and global dimension of  exact categories.
Later, we give some bounds for global dimension of the categories involved in a recollement of extriangulated categories, which generalizes  some results in \cite{PC14H} and is new in exact categories.

In Section 4, we introduce the notion of extension dimension of extriangulated categories,
 which gives a simultaneous generalization of dimension of triangulated categories and extension dimension of abelian categories.
 The relationship of extension dimension of the categories involved in a recollement of extriangulated categories is given, which unifies the results in triangulated categories \cite{PC14H} and abelian categories \cite{ZJL}.

Finally, in Section 5, we give some examples to illustrate the obtained results.

Throughout this paper,
all subcategories are assumed to be full, additive and closed under isomorphisms.
Let $\mathcal{C}$ be an extriangulated category, and let $\mathcal{U}$ be a class of objects of $\mathcal{C}$. We use $\add \mathcal{U}$ to
denote the subcategory of $\mathcal{C}$ consisting of
direct summands of finite direct sums of objects in $\mathcal{U}$.
Let $\Lambda$ be an artin algebra. We use $\mod \Lambda$ to denote the category of finitely generated left $\Lambda$-modules.
\section{Preliminaries}
Throughout, let $k$ be a field and $\mathcal{C}$ be a Krull-Schmidt Hom-finite, $k$-linear additive category.
We first recall some definitions and some basic properties of extriangulated categories from \cite{Na}.

Suppose that $\C$ is equipped with a biadditive functor $\E\colon\C^{op}\times\C\to\Ab$, where $\Ab$ is the category of abelian groups. For any pair of objects $A,C\in\C$, an element $\delta\in\E(C,A)$ is called an $\E$-extension. Zero element $0\in\E(C,A)$ is called the split $\E$-extension.
Since $\E$ is a bifunctor, for any $a\in\C(A,A')$ and $c\in\C(C',C)$, we have $\E$-extensions
$$ \E(C,a)(\del)\in\E(C,A')\ \ \text{and}\ \ \ \E(c,A)(\del)\in\E(C',A). $$
We abbreviate them to $a_\ast\del$ and $c^\ast\del$ respectively.
For any $\delta\in \E(C,A), \delta'\in \E(C',A'),$ since $\C$ and $\E$ are additive, we can define the $\E$-extension
$$\delta\oplus\delta'\in\E(C\oplus C',A\oplus A').$$

\begin{definition}{ \rm(\cite[Definition 2.3]{Na})}\label{mo}
A \emph{morphism} $(a,c):\delta\to\delta'$ of $\E$-extensions $\delta\in \E(C,A), \delta'\in \E(C',A')$ is a pair of morphisms $a\in\C(A,A')$ and $c\in\C(C,C')$ in $\C$ satisfying $ a_\ast\del=c^\ast\del'. $
\end{definition}

Let $A,C\in\C$ be any pair of objects. Sequences of morphisms in $\C$
$$\xymatrix@C=0.7cm{A\ar[r]^{x} & B \ar[r]^{y} & C}\ \ \text{and}\ \ \ \xymatrix@C=0.7cm{A\ar[r]^{x'} & B' \ar[r]^{y'} & C}$$
are said to be \emph{equivalent} if there exists an isomorphism $b\in\C(B,B')$ which makes the following diagram commutative:
$$\xymatrix@C=20pt@R=20pt{
A \ar[r]^x \ar@{=}[d] & B\ar[r]^y \ar[d]_{\simeq}^{b} & C\ar@{=}[d]&\\
A\ar[r]^{x'} & B' \ar[r]^{y'} & C. &}$$

We denote the equivalence class of $\xymatrix@C=0.7cm{A\ar[r]^{x} & B \ar[r]^{y} & C}$ by $[\xymatrix@C=0.7cm{A\ar[r]^{x} & B \ar[r]^{y} & C}]$.

\begin{definition}{ \rm(\cite[Definition 2.9]{Na})}\label{re}
For any $\E$-extension $\delta\in\E(C,A)$, one can associate an equivalence class $\s(\delta)=[\xymatrix@C=0.7cm{A\ar[r]^{x} & B \ar[r]^{y} & C}].$  This $\s$ is called a \emph{realization} of $\E$, if it satisfies the following condition:
\begin{itemize}
\item Let $\del\in\E(C,A)$ and $\del'\in\E(C',A')$ be any pair of $\E$-extensions with $$\s(\del)=[\xymatrix@C=0.7cm{A\ar[r]^{x} & B \ar[r]^{y} & C}],\ \ \ \s(\del')=[\xymatrix@C=0.7cm{A'\ar[r]^{x'} & B'\ar[r]^{y'} & C'}].$$
Then, for any morphism $(a,c)\colon\del\to\del'$, there exists $b\in\C(B,B')$ which makes the following diagram commutative:
\begin{equation*}\label{reali}
\begin{split}
\xymatrix@C=20pt@R=20pt{
A \ar[r]^x \ar[d]^a & B\ar[r]^y \ar[d]^{b} & C\ar[d]^c&\\
A'\ar[r]^{x'} & B' \ar[r]^{y'} & C'. &}
\end{split}
\end{equation*}
\end{itemize}
In this case, we say that the sequence $\xymatrix@C=0.7cm{A\ar[r]^{x} & B \ar[r]^{y} & C}$ \emph{realizes} \ $\del$, whenever it satisfies $\s(\del)=[\xymatrix@C=0.7cm{A\ar[r]^{x} & B \ar[r]^{y} & C}]$.
In the above situation, we say that the triple $(a,b,c)$ \emph{realizes} $(a,c)$.
\end{definition}

\begin{definition}{ \rm(\cite[Definition 2.10]{Na})}\label{rea}
Let $\C, \E$ be as above. A realization $\s$ of $\E$ is said to be \emph{additive} if the following conditions are satisfied:
\begin{itemize}
\item[{\rm (i)}] For any $A,C\in\C$, the split $\E$-extension $0\in \E(C,A)$ satisfies $\s(0)=0$.
\item[{\rm (ii)}] For any pair of $\E$-extensions $\del\in\E(A,C)$ and $\del'\in\E(A',C')$, we have
                  $$\s(\del\oplus \del')=\s(\del)\oplus \s(\del').$$.
\end{itemize}
\end{definition}

\begin{definition}{\rm(\cite[Definition 2.12]{Na})}\label{ext}   
We call the triple $(\C,\E,\s)$ an \emph{extriangulated category} if the following conditions are satisfied:
\begin{itemize}
\item[{\rm (ET1)}] $\E\colon\C^{op}\times\C\to\Ab$ is a biadditive functor.

\item[{\rm (ET2)}] $\s$ is an additive realization of $\E$.

\item[{\rm (ET3)}] Let $\del\in\E(C,A)$ and $\del'\in\E(C',A')$ be any pair of $\E$-extensions, realized as
$$ \s(\del)=[\xymatrix@C=0.7cm{A\ar[r]^{x} & B \ar[r]^{y} & C}],\ \ \s(\del')=[\xymatrix@C=0.7cm{A'\ar[r]^{x'} & B' \ar[r]^{y'} & C'}]. $$
For any commutative square
$$\xymatrix@C=20pt@R=20pt{
A \ar[r]^x \ar[d]^a & B\ar[r]^y \ar[d]^{b} & C&\\
A'\ar[r]^{x'} & B' \ar[r]^{y'} & C' &}$$
in $\C$, there exists a morphism $(a,c)\colon\del\to\del'$ which is realized by $(a,b,c)$.
\item[{\rm(ET3)$\rm^{op}$}] Dual of \rm (ET3).
\item[{\rm (ET4)}] Let $\delta\in\E(A,D)$ and $\del'\in\E(B,F)$ be $\E$-extensions realized as
$$\s(\del)=[\xymatrix@C=0.7cm{A\ar[r]^{f} & B \ar[r]^{f'} & D}]\ \ \text{and}\ \ \ \s(\del')=[\xymatrix@C=0.7cm{B\ar[r]^{g} & C \ar[r]^{g'} & F}]$$
respectively. Then there exist an object $E\in\C$, a commutative diagram
$$\xymatrix@C=20pt@R=20pt{A\ar[r]^{f}\ar@{=}[d]&B\ar[r]^{f'}\ar[d]^{g}&D\ar[d]^{d}\\
A\ar[r]^{h}&C\ar[d]^{g'}\ar[r]^{h'}&E\ar[d]^{e}\\
&F\ar@{=}[r]&F}$$
in $\C$, and an $\E$-extension $\del^{''}\in\E(E,A)$ realized by $\xymatrix@C=0.7cm{A\ar[r]^{h} & C \ar[r]^{h'} & E},$ which satisfy the following compatibilities:
\begin{itemize}
\item[{\rm (i)}] $\xymatrix@C=0.7cm{D\ar[r]^{d} & E \ar[r]^{e} & F}$  realizes $f'_{\ast}\del'$,
\item[{\rm (ii)}] $d^\ast\del''=\del$,
\item[{\rm (iii)}] $f_{\ast}\del''=e^{\ast}\del'$.
\end{itemize}

\item[{\rm (ET4)$\rm^{op}$}]  Dual of \rm (ET4).
\end{itemize}
\end{definition}

Many examples can be founded in \cite{HZZ20P,INY18A,Na}.
\begin{example}\label{example-extri}
(1) Exact categories, triangulated categories and extension-closed subcategories of triangulated categories are extriangulated categories.
Extension-closed subcategories of an extriangulated category are also extriangulated categories (see \cite[Remark 2.18]{Na}).

(2) If $\mathcal{C}$ is a triangulated category with suspension functor $\Sigma$ and
$\xi$ is a proper class of triangles (see \cite{B00R} for details), then $(\mathcal{C},\mathbb{E}_{\xi},\mathfrak{s}_{\xi})$ is an extriangulated category (see \cite[Remark 3.3]{HZZ20P}).

\end{example}

We recall the following notations.

\begin{itemize}
\item[(i)] A sequence $\xymatrix@C=15pt{A\ar[r]^{x} & B \ar[r]^{y} & C}$ is called a {conflation} if it realizes some $\E$-extension $\del\in\E(C,A)$.
In which case, $\xymatrix@C=15pt{A\ar[r]^{x} & B}$ is called an {inflation} and $\xymatrix@C=15pt{B \ar[r]^{y} & C}$ is called a {deflation}. We call $\xymatrix@C=15pt{A\ar[r]^{x} & B \ar[r]^{y} & C\ar@{-->}[r]^{\del}&}$ an \emph{$\E$-triangle}.

\item[(ii)] Given an $\E$-triangle $\xymatrix@C=15pt{A\ar[r]^{x} & B \ar[r]^{y} & C\ar@{-->}[r]^{\del}&},$ we call $A$ the {cocone} of $y\xymatrix@C=15pt{\colon B\ar[r]& C},$ and denote it by ${\rm cocone}(y);$ we call $C$ the {cone} of $x\colon \xymatrix@C=15pt{A\ar[r]& B},$ and denote it by ${\rm cone}(x).$
\item[(iii)] A subcategory $\mathcal{D}$ of $\C$ is called {extension-closed} if for any $\E$-triangle $\xymatrix@C=15pt{A\ar[r]^{x} & B \ar[r]^{y} & C\ar@{-->}[r]^{\del}&} $ with $A, C\in \mathcal{D}$, we have $B\in \mathcal{D}$.
\end{itemize}

Throughout this paper,
for an extriangulated category $\mathcal{C}$, we assume the following condition, which is analogous
to the weak idempotent completeness (see \cite[Proposition 7.6]{Bu}).

\begin{condition}\label{WIC} {\rm(WIC) (see \cite[Condition 5.8]{Na})} Let $f:X\rightarrow Y$ and  $g:Y\rightarrow Z$ be any composable pair of morphisms in $\mathcal{C}$.
\begin{itemize}
\item[(1)]  If $gf$ is an inflation, then $f$ is an inflation.
\item[(2)] If $gf$ is a deflation, then $g$ is a deflation.
    \end{itemize}
\end{condition}

\begin{definition}{\rm(\cite[Definitions 3.23 and 3.25]{Na})}
Let $\mathcal{C}$ be an extriangulated category.
\begin{itemize}
\item[(1)] An object $P$ in $\mathcal{C}$ is called {\em projective} if for any $\mathbb{E}$-triangle $A\stackrel{x}{\longrightarrow}B\stackrel{y}{\longrightarrow}C\stackrel{}\dashrightarrow$ and any morphism $c$ in $\mathcal{C}(P,C)$, there exists $b$ in $\mathcal{C}(P,B)$ such that $yb=c$.
We denote the full subcategory of projective objects in $\mathcal{C}$ by $\mathcal{P}(\mathcal{C})$.
Dually, the {\em injective} objects are defined, and the full subcategory of injective objects in $\mathcal{C}$ is denoted by $\mathcal{I}(\mathcal{C})$.
\item[(2)] We say that $\mathcal{C}$ {\em has enough projectives} if for any object $M\in\mathcal{C}$, there exists an $\mathbb{E}$-triangle $A\stackrel{}{\longrightarrow}P\stackrel{}{\longrightarrow}M\stackrel{}\dashrightarrow$ satisfying $P\in\mathcal{P}(\mathcal{C})$. Dually, we define that $\mathcal{C}$ {\em has enough injectives}.
\end{itemize}
\end{definition}

\begin{remark}\label{proj-coincide}
Let $\mathcal{C}$ be a triangulated category with a proper class $\xi$.
The projective objects in the extriangulated category
$(\mathcal{C},\mathbb{E}_{\xi},\mathfrak{s}_{\xi})$ coincides with the $\xi$-projective objects of $\mathcal{C}$ in sense of Beligiannis \cite{B00R}.
In fact, it is clear that the $\xi$-projective objects of $\mathcal{C}$ are projective objects of $\mathcal{C}$.
On the other hand,
let $P\in \mathcal{P(C)}$ and $\xymatrix{A\ar[r]&B\ar[r]&C\ar@{-->}[r]&}$ be an $\mathbb{E}$-triangle.
Then $\xymatrix{\Sigma^{-1}A\ar[r]&\Sigma^{-1}B\ar[r]&\Sigma^{-1}C\ar@{-->}[r]&}$
is also an $\mathbb{E}$-triangle.
Consider the following exact sequence
$$\xymatrix{\cdots\ar[r]&\mathcal{C}(P,\Sigma^{-1}B)\ar[r]&\mathcal{C}(P,\Sigma^{-1}C)\ar[r]&
\mathcal{C}(P,A)\ar[r]&\mathcal{C}(P,B)\ar[r]&\cdots}.$$
One can see that $\xymatrix{\mathcal{C}(P,\Sigma^{-1}B)\ar[r]&\mathcal{C}(P,\Sigma^{-1}C)}$ is epic, so $\xymatrix{
\mathcal{C}(P,A)\ar[r]&\mathcal{C}(P,B)}$ is monic.
Notice that $\xymatrix{\mathcal{C}(P,B)\ar[r]&\mathcal{C}(P,C)}$ is also epic,
then $$\xymatrix{0\ar[r]&
\mathcal{C}(P,A)\ar[r]&\mathcal{C}(P,B)\ar[r]&\mathcal{C}(P,C)\ar[r]&0}$$
is exact, and hence $P$ is a $\xi$-projective object in $\mathcal{C}$.
Thus $\mathcal{P(\mathcal{C})}=\mathcal{P}(\xi)$ (where $\mathcal{P(\xi)}$ denotes the subcategory of $\mathcal{C}$ consisting of all $\xi$-projective objects in $\mathcal{C}$ (see \cite[Section 4]{B00R})).
\end{remark}

Throughout this paper, we will always assume that all extriangulated categories admit enough projective objects and injective objects.

Let $\mathcal{C}$ be an extriangulated category.
 Recall from \cite{ZZ20T} that an {\em $\mathbb{E}$-triangle sequence} is defined as a sequence
 $$\cdots {\longrightarrow}X_{n+2}\stackrel{d_{n+2}}{\longrightarrow}X_{n+1}\stackrel{d_{n+1}}{\longrightarrow} X_{n}\stackrel{d_{n}}{\longrightarrow}X_{n-1}{\longrightarrow}\cdots $$
 in $\mathcal{C}$ such that for any $n$,
 there exist $\mathbb{E}$-triangles $K_{n+1}\stackrel{g_{n}}{\longrightarrow}X_{n}\stackrel{f_{n}}{\longrightarrow}K_{n}\stackrel{}\dashrightarrow$ and the differential $d_n=g_{n-1}f_n$.




\begin{definition}\label{right}{\rm(\cite[Definition 2.8]{WWZ20R})} Let $\mathcal{C}$ be an extriangulated category.
A morphism $f$ in $\mathcal{C}$ is called {\em compatible} provided that the following condition holds:
\begin{center}
  $f$ is both an inflation and a deflation implies that $f$ is an isomorphism.
\end{center}
That is, the class of compatible morphisms is the  class
$$\{f~|~f~\text{is~not~an~inflation},~\text{or}~f~\text{is~not~a~deflation},~\text{or}~f~\text{is~an~isomorphism}\}.$$
\end{definition}

It is clear that all morphisms are compatible in an exact category. While, the compatible morphisms in a triangulated category $\mathcal{C}$ are just the isomorphisms in $\mathcal{C}$.

\begin{definition}\label{right} {\rm(\cite[Definition 2.9]{WWZ20R})}
Let $\mathcal{C}$ be an extriangulated category.
A sequence $A\stackrel{f}{\longrightarrow}B\stackrel{g}{\longrightarrow}C$ in $\mathcal{C}$ is said to be {\em right exact} if
there exists an $\mathbb{E}$-triangle $K\stackrel{h_{2}}{\longrightarrow}B\stackrel{g}{\longrightarrow}C\stackrel{}\dashrightarrow$ and a deflation $h_{1}:A\rightarrow K$ which is compatible, such that $f=h_2h_1$. Dually one can also define the {\em left exact} sequences.

A $4$-term $\mathbb{E}$-triangle sequence $\xymatrix@C=15pt{A\ar[r]^{f}&B\ar[r]^{g}&C\ar[r]^{h}&D}$ in $\mathcal{C}$ is called {\em right exact} (resp. {\em left exact}) if there exist $\mathbb{E}$-triangles $A\stackrel{f}{\longrightarrow}B\stackrel{g_1}{\longrightarrow}K\stackrel{}\dashrightarrow$
and $K\stackrel{g_{2}}{\longrightarrow}C\stackrel{h}{\longrightarrow}D\stackrel{}\dashrightarrow$ such that $g=g_2g_1$ and $g_1$ (resp. $g_2$) is compatible.
\end{definition}

Let us recall the notion of right (left) exact functors in extriangulated categories.

\begin{definition}\label{right exact}{\rm
(\cite[Definition 2.12]{WWZ20R})} Let $(\mathcal{A},\mathbb{E}_{\mathcal{A}},\mathfrak{s}_{\mathcal{A}})$ and $(\mathcal{B},\mathbb{E}_{\mathcal{B}},\mathfrak{s}_{\mathcal{B}})$ be extriangulated categories. An additive covariant functor $F:\mathcal{A}\rightarrow \mathcal{B}$ is called a {\em right exact functor} if it satisfies the following conditions:
\begin{itemize}
   \item [(1)] If $f$ is a compatible morphism in $\A$, then $Ff$ is compatible in $\B$.
   \item [(2)] If $A\stackrel{a}{\longrightarrow}B\stackrel{b}{\longrightarrow}C$ is right exact in $\mathcal{A}$, then $FA\stackrel{Fa}{\longrightarrow}FB\stackrel{Fb}{\longrightarrow}FC$ is right exact in $\mathcal{B}$. (In particular,  for any $\mathbb{E}_{\mathcal{A}}$-triangle $A\stackrel{f}{\longrightarrow}B\stackrel{g}{\longrightarrow}C\stackrel{\delta}\dashrightarrow$, there exists an $\mathbb{E}_{\mathcal{B}}$-triangle $A'\stackrel{x}{\longrightarrow}FB\stackrel{Fg}{\longrightarrow}FC\stackrel{}\dashrightarrow$ such that $Ff=xy$ and $y: FA\rightarrow A'$ is a deflation and compatible. Moreover, $A'$ is uniquely determined up to isomorphism.)
  \item [(3)] There exists a natural transformation $$\eta=\{\eta_{(C,A)}:\mathbb{E}_{\mathcal{A}}(C,A)\longrightarrow\mathbb{E}_{\mathcal{B}}(F^{op}C,A')\}_{(C,A)\in{\A}^{\rm op}\times\A}$$ such that $\mathfrak{s}_{\mathcal{B}}(\eta_{(C,A)}(\delta))=[A'\stackrel{x}{\longrightarrow}FB\stackrel{Fg}{\longrightarrow}FC]$.
 \end{itemize}

Dually, we define the {\em left exact functor} between two extriangulated categories.
\end{definition}

The extriangulated functor between two extriangulated categories has been defined in \cite{Ben} (also see \cite{WWZ20R}).
\begin{definition}\label{exact functor}{\rm(\cite{Ben} and \cite[Definition 2.13]{WWZ20R})}
Let $(\mathcal{A},\mathbb{E}_{\mathcal{A}},\mathfrak{s}_{\mathcal{A}})$ and $(\mathcal{B},\mathbb{E}_{\mathcal{B}},\mathfrak{s}_{\mathcal{B}})$ be extriangulated categories.  We say an additive covariant functor $F:\mathcal{A}\rightarrow \mathcal{B}$ is an {\em exact functor} if the following conditions hold:
\begin{itemize}
  \item [(1)] If $f$ is a compatible morphism in $\A$, then $Ff$ is compatible in $\B$.
  \item [(2)] There exists a natural transformation $$\eta=\{\eta_{(C,A)}\}_{(C,A)\in{\A}^{\rm op}\times\A}:\mathbb{E}_{\mathcal{A}}(-,-)\Rightarrow\mathbb{E}_{\mathcal{B}}(F^{\rm op}-,F-).$$
  \item [(3)] If $\mathfrak{s}_{\mathcal{A}}(\delta)=[A\stackrel{x}{\longrightarrow}B\stackrel{y}{\longrightarrow}C]$, then $\mathfrak{s}_{\mathcal{B}}(\eta_{(C,A)}(\delta))=[F(A)\stackrel{F(x)}{\longrightarrow}F(B)\stackrel{F(y)}{\longrightarrow}F(C)]$.

  \end{itemize}

\end{definition}

\begin{remark}{\rm(\cite[Proposition 2.14 and Remark 2.15]{WWZ20R})} Let $(\mathcal{A},\mathbb{E}_{\mathcal{A}},\mathfrak{s}_{\mathcal{A}})$ and $(\mathcal{B},\mathbb{E}_{\mathcal{B}},\mathfrak{s}_{\mathcal{B}})$ be extriangulated categories. An additive covariant functor $F: \mathcal{A}\rightarrow\mathcal{B}$ is exact if and only if $F$ is both left exact and right exact.
In particular,
\begin{itemize}
\item[(1)] If the categories $\mathcal{A}$ and $\mathcal{B}$ are abelian, the exact (resp. left exact, right exact) functors agree with
the usual definitions.
\item[(2)] If the categories $\mathcal{A}$ and $\mathcal{B}$ are triangulated, we know that $F$ is a left exact functor if and only if $F$ is a triangle
functor if and only if $F$ is a right exact functor.
\end{itemize}
\end{remark}

Now we recall the concept of recollements of extriangulated categories \cite{WWZ20R}, which gives a simultaneous generalization of recollements of triangulated categories and abelian categories (see \cite{BBD, Fr}).

\begin{definition}\label{def-rec}{\rm(\cite[Definition 3.1]{WWZ20R}) }
Let $\mathcal{A}$, $\mathcal{B}$ and $\mathcal{C}$ be three extriangulated categories. A \emph{recollement} of $\mathcal{B}$ relative to
$\mathcal{A}$ and $\mathcal{C}$, denoted by ($\mathcal{A}$, $\mathcal{B}$, $\mathcal{C}$), is a diagram
\begin{equation*}\label{recolle}
  \xymatrix{\mathcal{A}\ar[rr]|{i_{*}}&&\ar@/_1pc/[ll]|{i^{*}}\ar@/^1pc/[ll]|{i^{!}}\mathcal{B}
\ar[rr]|{j^{\ast}}&&\ar@/_1pc/[ll]|{j_{!}}\ar@/^1pc/[ll]|{j_{\ast}}\mathcal{C}}
\end{equation*}
given by two exact functors $i_{*},j^{\ast}$, two right exact functors $i^{\ast}$, $j_!$ and two left exact functors $i^{!}$, $j_\ast$, which satisfies the following conditions:
\begin{itemize}
  \item [(R1)] $(i^{*}, i_{\ast}, i^{!})$ and $(j_!, j^\ast, j_\ast)$ are adjoint triples.
  \item [(R2)] $\Im i_{\ast}=\Ker j^{\ast}$.
  \item [(R3)] $i_\ast$, $j_!$ and $j_\ast$ are fully faithful.
  \item [(R4)] For each $B\in\mathcal{B}$, there exists a left exact $\mathbb{E}$-triangle sequence
$$
  \xymatrix{i_\ast i^!( B)\ar[r]^-{\theta_B}&B\ar[r]^-{\vartheta_B}&j_\ast j^\ast (B)\ar[r]&i_\ast (A)}
$$
 in $\mathcal{B}$ with $A\in \mathcal{A}$, where $\theta_B$ and  $\vartheta_B$ are given by the adjunction morphisms.
  \item [(R5)] For each $B\in\mathcal{B}$, there exists a right exact $\mathbb{E}$-triangle sequence
$$
  \xymatrix{i_\ast\ar[r]( A') &j_! j^\ast (B)\ar[r]^-{\upsilon_B}&B\ar[r]^-{\nu_B}&i_\ast i^\ast (B)&}
$$
in $\mathcal{B}$ with $A'\in \mathcal{A}$, where $\upsilon_B$ and $\nu_B$ are given by the adjunction morphisms.
\end{itemize}
\end{definition}

\begin{example}\label{exam-rec-abelian}
Let $(\mathcal{A},\mathcal{B},\mathcal{C})$ be a recollement of abelian categories. Assume that $\widetilde{\mathcal{X}}$ is an extension-closed subcategory of $\mathcal{C}$, then
$\mathcal{X}:=\{B\in \mathcal{B}\mid j^{*}(B)\in \widetilde{\mathcal{X}}\}$ is an extension-closed subcategory of $\mathcal{B}$.
It is easy to check that
\begin{equation*}\label{lliz}
  \xymatrix{\mathcal{A}\ar[rr]|{i_{*}}&&\ar@/_1pc/[ll]|{i^{*}}\ar@/^1pc/[ll]|{i^{!}}\mathcal{X}
\ar[rr]|{j^{\ast}}&&\ar@/_1pc/[ll]|{j_{!}}\ar@/^1pc/[ll]|{j_{\ast}} \widetilde{\mathcal{X}}}
\end{equation*}
is a recollement of extriangulated categories.
\end{example}


We collect some properties of recollements of extriangulated categories (see \cite{WWZ20R}).

\begin{lemma}\label{lem-rec} Let ($\mathcal{A}$, $\mathcal{B}$, $\mathcal{C}$) be a recollement of extriangulated categories.

$(1)$ All the natural transformations
$$i^{\ast}i_{\ast}\Rightarrow\Id_{\A},~\Id_{\A}\Rightarrow i^{!}i_{\ast},~\Id_{\C}\Rightarrow j^{\ast}j_{!},~j^{\ast}j_{\ast}\Rightarrow\Id_{\C}$$
are natural isomorphisms.
Moreover, $i^{!}$, $i^{*}$ and $j^{*}$ are dense.

$(2)$ $i^{\ast}j_!=0$ and $i^{!}j_\ast=0$.

$(3)$ $i^{\ast}$ preserves projective objects and $i^{!}$ preserves injective objects.

$(3')$ $j_{!}$ preserves projective objects and $j_{\ast}$ preserves injective objects.

$(4)$ If $i^{!}$ (resp. $j_{\ast}$) is  exact, then $i_{\ast}$ (resp. $j^{\ast}$) preserves projective objects.

$(4')$ If $i^{\ast}$ (resp. $j_{!}$) is  exact, then $i_{\ast}$ (resp. $j^{\ast}$) preserves injective objects.






$(5)$ If $i^{!}$ is exact, then $j_{\ast}$ is exact.

$(5')$  If $i^{\ast}$ is exact, then $j_{!}$ is  exact.

$(6)$ If $i^{!}$ is exact, for each $B\in\mathcal{B}$, there is an $\mathbb{E}$-triangle
  \begin{equation*}\label{third}
  \xymatrix{i_\ast i^! (B)\ar[r]^-{\theta_B}&B\ar[r]^-{\vartheta_B}&j_\ast j^\ast (B)\ar@{-->}[r]&}
   \end{equation*}
 in $\mathcal{B}$ where $\theta_B$ and  $\vartheta_B$ are given by the adjunction morphisms.

$(6')$ If $i^{\ast}$ is exact, for each $B\in\mathcal{B}$, there is an $\mathbb{E}$-triangle
  \begin{equation*}\label{four}
  \xymatrix{ j_! j^\ast (B)\ar[r]^-{\upsilon_B}&B\ar[r]^-{\nu_B}&i_\ast i^\ast (B) \ar@{-->}[r]&}
   \end{equation*}
in $\mathcal{B}$ where $\upsilon_B$ and $\nu_B$ are given by the adjunction morphisms.
\end{lemma}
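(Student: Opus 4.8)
The plan is to prove the ten assertions in the order $(1),(2),(3),(3'),(4),(4'),(6),(6'),(5),(5')$, each one allowed to use the earlier ones. For $(1)$ I would only invoke the standard categorical fact that for an adjoint pair $(F,G)$ with $G$ fully faithful the counit $FG\Rightarrow\Id$ is invertible, and with $F$ fully faithful the unit $\Id\Rightarrow GF$ is invertible; applying this to the adjoint pairs $(i^{*},i_{*})$, $(i_{*},i^{!})$, $(j_{!},j^{*})$, $(j^{*},j_{*})$ of (R1), together with the full faithfulness of $i_{*},j_{!},j_{*}$ from (R3), gives the four asserted natural isomorphisms, and density of $i^{*},i^{!},j^{*}$ is then immediate from $A\cong i^{*}i_{*}A\cong i^{!}i_{*}A$ and $C\cong j^{*}j_{!}C$. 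For $(2)$, (R2) gives $j^{*}i_{*}=0$, hence for all $C\in\mathcal{C}$, $A\in\mathcal{A}$ the adjunction isomorphisms yield $\mathcal{A}(i^{*}j_{!}C,A)\cong\mathcal{B}(j_{!}C,i_{*}A)\cong\mathcal{C}(C,j^{*}i_{*}A)=0$, so $i^{*}j_{!}=0$ by Yoneda, and $i^{!}j_{*}=0$ dually.

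For parts $(3)$ through $(4')$ I would use a single lemma, valid because every category in sight has enough projectives and injectives: if $(F,G)$ is an adjoint pair of additive functors between extriangulated categories with $G$ exact, then $F$ sends projectives to projectives (if $P$ is projective and $g$ is a deflation, then $Gg$ is a deflation, so $\mathcal{A}(FP,g)\cong\mathcal{B}(P,Gg)$ is epic), and dually the right adjoint of an exact functor sends injectives to injectives. Since $i_{*}$ and $j^{*}$ are exact, this gives $(3)$ (for $i^{*}$ and $i^{!}$) and $(3')$ (for $j_{!}$ and $j_{*}$) at once; under the respective hypothesis that $i^{!}$, $i^{*}$, $j_{*}$ or $j_{!}$ is exact, applying the lemma to the adjoint pair in which that functor is the exact one gives $(4)$ and $(4')$.

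The substantive parts are $(6),(6'),(5),(5')$. For $(6)$ I would take the left exact $\mathbb{E}$-triangle sequence of (R4), namely $\mathbb{E}$-triangles $i_{*}i^{!}B\xrightarrow{\theta_{B}}B\xrightarrow{g_{1}}K\dashrightarrow$ and $K\xrightarrow{g_{2}}j_{*}j^{*}B\rightarrow i_{*}A\dashrightarrow$ with $\vartheta_{B}=g_{2}g_{1}$, and apply the exact functor $i^{!}$. On the first triangle, $\theta_{B}$ is the counit of $i_{*}\dashv i^{!}$, so the triangle identity $i^{!}\theta_{B}\circ\eta_{i^{!}B}=\id$, where $\eta\colon\Id_{\mathcal{A}}\Rightarrow i^{!}i_{*}$ is the unit (invertible by $(1)$), makes $i^{!}\theta_{B}$ invertible; since an $\mathbb{E}$-triangle whose inflation is invertible has zero cone, $i^{!}K=0$. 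On the second triangle, using $i^{!}j_{*}=0$ from $(2)$, $i^{!}i_{*}\cong\Id$ from $(1)$ and $i^{!}K=0$, we obtain an $\mathbb{E}$-triangle $0\rightarrow 0\rightarrow A\dashrightarrow$; its class lies in $\mathbb{E}(A,0)=0$, which forces $A\cong 0$. Hence $i_{*}A=0$, $g_{2}$ is invertible, and composing it into the first triangle gives the required $\mathbb{E}$-triangle $i_{*}i^{!}B\xrightarrow{\theta_{B}}B\xrightarrow{\vartheta_{B}}j_{*}j^{*}B\dashrightarrow$; assertion $(6')$ is the exact dual, using (R5) and the exactness of $i^{*}$.

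Finally, for $(5)$: $j_{*}$ is left exact by the definition of a recollement, so a conflation $C'\xrightarrow{f}C\xrightarrow{g}C''\dashrightarrow$ in $\mathcal{C}$ yields an $\mathbb{E}$-triangle $j_{*}C'\xrightarrow{j_{*}f}j_{*}C\xrightarrow{x}X\dashrightarrow$ and a compatible inflation $y\colon X\rightarrow j_{*}C''$ with $j_{*}g=yx$. Applying the exact functor $j^{*}$ and the isomorphism $j^{*}j_{*}\cong\Id$ writes $g=j^{*}(y)\circ j^{*}(x)$, exhibiting the deflation $g$ as a composite, so $j^{*}y$ is a deflation by (WIC); it is also an inflation (since $j^{*}$ is exact and $y$ is an inflation) and compatible (since $j^{*}$ is exact and $y$ is compatible), hence invertible, so $\cone(y)\in\Ker j^{*}=\Im i_{*}$, say $\cone(y)\cong i_{*}A$. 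Applying the exact functor $i^{!}$ to the $\mathbb{E}$-triangle of $x$ gives $0\rightarrow 0\rightarrow i^{!}X\dashrightarrow$, forcing $i^{!}X=0$, and then applying $i^{!}$ to the $\mathbb{E}$-triangle of $y$ gives $0\rightarrow 0\rightarrow A\dashrightarrow$, forcing $A=0$; thus $\cone(y)=0$, $y$ is invertible, and $j_{*}$ carries the given conflation to the conflation $j_{*}C'\rightarrow j_{*}C\rightarrow j_{*}C''$. Being left exact and sending every $\mathbb{E}$-triangle to an $\mathbb{E}$-triangle, $j_{*}$ is therefore exact, and $(5')$ is the dual statement with $j_{!}$, $i^{*}$ and (WIC)(1) replacing $j_{*}$, $i^{!}$ and (WIC)(2). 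The step I expect to be the main obstacle is precisely this last \emph{collapse} argument: forcing the correction objects (such as $A$ and $i^{!}X$ above) to vanish, which hinges on the interplay of the triangle identities, (WIC), the preservation of compatibility by exact functors, and the rigidity of an $\mathbb{E}$-triangle with an invertible inflation.
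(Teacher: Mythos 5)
Your argument is correct in all its essential steps, but note that there is no in-paper proof to compare it with: the paper imports Lemma \ref{lem-rec} wholesale from Wang--Wei--Zhang \cite{WWZ20R} ("We collect some properties of recollements of extriangulated categories"), so your write-up is a reconstruction of the cited source rather than an alternative to an argument given here. Your reconstruction follows the standard route: (1)--(4$'$) are adjunction formalities (fully faithful adjoints make units/counits invertible, Yoneda plus $\operatorname{Im}i_\ast=\operatorname{Ker}j^\ast$ gives $i^\ast j_!=0$ and $i^!j_\ast=0$, and the lemma that a left adjoint of an exact functor preserves projectives, with its dual, covers all four preservation statements), while the substantive parts (5), (5$'$), (6), (6$'$) are handled by the right key idea: apply the exact functor $i^!$ (resp. $i^\ast$) to the two $\mathbb{E}$-triangles underlying the left (resp. right) exact sequence of (R4) (resp. (R5)), use the triangle identity together with the invertible unit to see that $i^!\theta_B$ is invertible, and use the elementary facts that an $\mathbb{E}$-triangle with invertible inflation has zero cone and that an $\mathbb{E}$-triangle with zero cone has invertible inflation, so the correction objects (which lie in $\operatorname{Im}i_\ast$ and are detected by $i^!i_\ast\cong\operatorname{Id}$) collapse; the same collapse, combined with (WIC) and preservation of compatibility by $j^\ast$, drives your proof of (5). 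The one place to tighten is the final sentence of (5): concluding that $j_\ast$ is exact from "left exact and sends every $\mathbb{E}$-triangle to an $\mathbb{E}$-triangle" is slightly loose against Definition \ref{exact functor}; you should either verify its conditions (2)--(3) explicitly by transporting the natural transformation furnished by left exactness along the isomorphism $y$ (whose naturality deserves a word), or observe that your computation also exhibits $j_\ast$ as right exact and invoke the stated criterion that exact equals left exact plus right exact (\cite[Proposition 2.14]{WWZ20R}); the same remark applies to (5$'$). This is bookkeeping rather than a genuine gap.
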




\section{Global dimensions  and recollements}

Now we introduce the notion of global dimension of extriangulated categories.

\begin{definition}
Let $\mathcal{C}$ be an extriangulated category and $C\in \mathcal{C}$. The projective dimension of $C$, denoted by $\pd_{\mathcal{C}}C$, is defined as
\begin{align*}\pd _{\mathcal{C}}C:=&\inf \{n \geq 0\mid \text{there exists an }\mathbb{E}\text{-triangle sequence }\\&P_{n}\stackrel{d_{n}}{\longrightarrow}P_{n-1}\stackrel{d_{n-1}}{\longrightarrow}\cdots \stackrel{d_{2}}{\longrightarrow} P_{1}\stackrel{d_{1}}{\longrightarrow}P_{0}\stackrel{d_{0}}{\longrightarrow}C\text{ in }\mathcal{C}\text{ with }P_{i}\in \mathcal{P}(C) \text{ for }0\leq i\leq n\}.
\end{align*}
The global dimension of $\mathcal{C}$, denoted by $\gl \mathcal{C}$, is defined as
$$\gl \mathcal{C}:=\sup\{\pd_{\mathcal{C}}C\mid C\in \mathcal{C}\}.$$
\end{definition}



 \begin{example}
(1) If $\mathcal{C}$ is an exact category, then these agree with
the usual definitions.

(2) {\rm
(see \cite[Example 4.5]{B00R})}
 If $\mathcal{C}$ is a triangulated category with a proper class $\xi$, and $C$ is an object in $\mathcal{C}$,
then $\xi\text{-}\pd C$ (the $\xi$-projective dimension of $C$) and $\xi\text{-}\gl \mathcal{C}$ (the $\xi$-global dimension of $\mathcal{C}$), defined in \cite[Section 4]{B00R}, coincides with $\pd_{\mathcal{C}}C$ and $\gl\mathcal{C}$ respectively by Remark \ref{proj-coincide}.
In particular, if $\mathcal{C}$ is a triangulated category, then $\mathcal{P(C)}$ consists of zero objects, and $\gl \mathcal{C}=\infty$. Moreover, it always has enough projectives.
Assume that $\mathcal{C}$ is a triangulated category with a proper class $\xi$ consisting of all split triangles in $\mathcal{C}$, then $\gl\mathcal{C}=\xi\text{-}\gl \mathcal{C}=0$.

(3)
Let $\Lambda$ be a finite dimensional algebra given by the quiver
\[\qquad 1\longleftarrow2\longleftarrow3. \]
and let $\operatorname{D^b}(\mod \Lambda)$ be the bounded derived category.
We know that its Auslander--Reiten quiver is as follows (see \cite[5.6]{Ha1})
\[\begin{tikzpicture}[scale=0.5, fl/.style={->,>=latex}]
\foreach \x in {-1,0,1,2} {
   \draw[fl] (3.6*\x,1.5) -- (3.6*\x+1,0.5) ;
   \draw[fl] (3.6*\x+1.8,-0.5) -- (3.6*\x+2.8,-1.5) ;
   \draw[fl] (3.6*\x,-1.5) -- (3.6*\x+1,-0.5) ;
   \draw[fl] (3.6*\x+1.8,0.5) -- (3.6*\x+2.8,1.5) ;
   \draw[fl, dashed] (3.6*\x+2.4,2) -- (3.6*\x+0.4,2) ;
   \draw[fl, dashed] (3.6*\x+0.6,0) -- (3.6*\x-1.4,0) ;
   \draw[fl, dashed] (3.6*\x+2.4,-2) -- (3.6*\x+0.4,-2) ;
};
   \draw[fl, dashed] (3.6*3+0.6,0) -- (3.6*3-1.4,0) ;
\draw (-5.8,2) node {$\cdots$} ;
\draw (-4,2) node {$3[-1]$} ;
\draw (-0.4,2) node {$\bsm3\\2\\1\esm$} ;
\draw (3.2,2) node {$1[1]$} ;
\draw (6.8,2) node {$2[1]$} ;
\draw (10.4,2) node {$3[1]$} ;
\draw (12.2,2) node {$\cdots$} ;
\draw (-5.8,0) node {$\cdots$} ;
\draw (-2.2,0) node {$\bsm2\\1\esm$} ;
\draw (1.4,0) node {$\bsm3\\2\esm$} ;
\draw (5,0) node {$\bsm2\\1\esm[1]$} ;
\draw (8.6,0) node {$\bsm3\\2\esm[1]$} ;
\draw (12.2,0) node {$\cdots$} ;
\draw (-5.8,-2) node {$\cdots$} ;
\draw (-4,-2) node {$1$} ;
\draw (-0.4,-2) node {$2$} ;
\draw (3.2,-2) node {$3$} ;
\draw (6.8,-2) node {$\bsm3\\2\\1\esm[1]$} ;
\draw (10.4,-2) node {$1[2]$} ;
\draw (12.2,-2) node {$\cdots$} ;
\end{tikzpicture}
\]

\begin{itemize}

\item[(a)] Consider the triangulated category $\mathcal{T}:=D^{b}(\mod \Lambda)/\tau^{-1}[1]$.
The Auslander-Reiten quiver of $\mathcal{T}$ is
\[\begin{tikzpicture}[scale=0.5, fl/.style={->,>=latex}]

 \foreach \x in {-2,-1,0} { \draw[fl] (3.6*\x+1.8,0.5) -- (3.6*\x+2.8,1.5) ;}

 \foreach \x in {-2,-1,0,1} {\draw[fl] (3.6*\x+1.8,-0.5) -- (3.6*\x+2.8,-1.5) ; }

\foreach \x in {-1,0,1} {\draw[fl] (3.6*\x,1.5) -- (3.6*\x+1,0.5) ;}

\foreach \x in {-1,0} {\draw[fl, dashed] (3.6*\x+2.4,2) -- (3.6*\x+0.4,2) ;   }

\foreach \x in {-1,0,1} {\draw[fl, dashed] (3.6*\x+0.6,0) -- (3.6*\x-1.4,0) ;   }

\foreach \x in {-2,-1,0,1} {

   \draw[fl] (3.6*\x,-1.5) -- (3.6*\x+1,-0.5) ;

   \draw[fl, dashed] (3.6*\x+2.4,-2) -- (3.6*\x+0.4,-2) ;
};

\draw (-4,2) node {$\bsm3\\2\\1\esm[1]$} ;

\draw (-6,0) node {$\bsm2\\1\esm[1]$} ;
\draw (-8,-2) node {$1[1]$} ;
\draw (-0.4,2) node {$\bsm3\\2\\1\esm$} ;
\draw (3.2,2) node {$1[1]$} ;
%
\draw (-2.2,0) node {$\bsm2\\1\esm$} ;
\draw (1.4,0) node {$\bsm3\\2\esm$} ;
\draw (5,0) node {$\bsm2\\1\esm[1]$} ;
%
\draw (-4,-2) node {$1$} ;
\draw (-0.4,-2) node {$2$} ;
\draw (3.2,-2) node {$3$} ;
\draw (6.8,-2) node {$\bsm3\\2\\1\esm[1]$} ;
\end{tikzpicture}
\]
\begin{itemize}

\item[(a.1)] Take an extension-closed subcategory $\mathcal{C}=\add(1\oplus\bsm2\\1\esm\oplus 2)$ of $\mathcal{T}$, and thus $\mathcal{C}$ is an extriangulated category.
Clearly, $\mathcal{P(C)}=\add (\bsm2\\1\esm)$, $\mathcal{C}$ has enough projective objects, and
$\gl\mathcal{C}=\infty$.

\item[(a.2)]
 Take a rigid subcategory $\mathcal{R}=\add (\bsm2\\1\esm)$ of $\mathcal{T}$.
Then $\mathcal{C}:=\{X\in \mathcal{T}\mid\mathcal{T}(\mathcal{R},X[1])=0\}=\add (1\oplus \bsm2\\1\esm\oplus\bsm3\\2\\1\esm\oplus 2\oplus \bsm3\\2\\1\esm[1] )$ is an extriangulated category by \cite[Remark 2.18]{Na} (also see Example \ref{example-extri}).
By \cite[Example 3.26]{Na}, we know that $\mathcal{P(C)}=\mathcal{R}$, $\mathcal{C}$ has enough projective objects,
and we have $\gl\mathcal{C}=\infty$.
\end{itemize}

The following extriangulated categories can be founded in \cite{INY18A}.

\item[(b)] $(\mathcal{C},\mathbb{E},\mathfrak{s})$ is an extriangulated category with the following
Auslander-Reiten quiver
\[
\begin{tikzpicture}[scale=0.5, fl/.style={->,>=latex}]
\foreach \x in {-1,0,1,2} {
   \draw[fl] (3.6*\x+1.8,0.5) -- (3.6*\x+2.8,1.5) ;
   \draw[fl] (3.6*\x,-1.5) -- (3.6*\x+1,-0.5) ;
};
\foreach \x in {0,1,2} {
   \draw[fl] (3.6*\x,1.5) -- (3.6*\x+1,0.5) ;
   \draw[fl] (3.6*\x-1.8,-0.5) -- (3.6*\x-0.8,-1.5) ;
   \draw[fl, dashed] (3.6*\x+2.4,2) -- (3.6*\x+0.4,2) ;
   \draw[fl, dashed] (3.6*\x+0.6,0) -- (3.6*\x-1.4,0) ;
   \draw[fl, dashed] (3.6*\x-1.2,-2) -- (3.6*\x-3.2,-2) ;
};
\draw (-0.4,2) node {$\bsm3\\2\\1\esm$} ;
\draw (3.2,2) node {$1[1]$} ;
\draw (6.8,2) node {$2[1]$} ;
\draw (10.4,2) node {$3[1]$} ;
\draw (-2.2,0) node {$\bsm2\\1\esm$} ;
\draw (1.4,0) node {$\bsm3\\2\esm$} ;
\draw (5,0) node {$\bsm2\\1\esm[1]$} ;
\draw (8.6,0) node {$\bsm3\\2\esm[1]$} ;
\draw (-4,-2) node {$1$} ;
\draw (-0.4,-2) node {$2$} ;
\draw (3.2,-2) node {$3$} ;
\draw (6.8,-2) node {$\bsm3\\2\\1\esm[1]$} ;
%
\end{tikzpicture}\]
\begin{itemize}
\item[(b.1)] One can see that $\mathcal{P(C)}=\add(1\oplus\bsm2\\1\esm\oplus\bsm3\\2\\1\esm)$, $\mathcal{C}$ has enough projective objects, and $\gl\mathcal{C}=2$.
\item[(b.2)]
Take an extension-closed subcategory $\mathcal{C'}=\add(\bsm2\\1\esm\oplus \bsm3\\2\\1\esm\oplus 2\oplus\bsm3\\2\esm\oplus 1[1])$ of $\mathcal{C}$.
One can see that $\mathcal{P(C')}=\add(2\oplus\bsm2\\1\esm\oplus\bsm3\\2\\1\esm)$, $\mathcal{C'}$ has enough projective objects, and $\gl\mathcal{C'}=1$.
\end{itemize}

\item[(c)] $(\mathcal{C},\mathbb{F},\mathfrak{t})$ is an extriangulated category with the following
Auslander-Reiten quiver
\[
\begin{tikzpicture}[scale=0.5, fl/.style={->,>=latex}]
\foreach \x in {-1,0,1,2} {
   \draw[fl] (3.6*\x+1.8,0.5) -- (3.6*\x+2.8,1.5) ;
   \draw[fl] (3.6*\x,-1.5) -- (3.6*\x+1,-0.5) ;
};
\foreach \x in {0,1,2} {
   \draw[fl] (3.6*\x,1.5) -- (3.6*\x+1,0.5) ;
   \draw[fl] (3.6*\x-1.8,-0.5) -- (3.6*\x-0.8,-1.5) ;
   \draw[fl, dashed] (3.6*\x+2.4,2) -- (3.6*\x+0.4,2) ;
   \draw[fl, dashed] (3.6*\x+0.6,0) -- (3.6*\x-1.4,0) ;
};
\foreach \x in {0,1} {
   \draw[fl, dashed] (3.6*\x-1.2,-2) -- (3.6*\x-3.2,-2) ;
};
\draw (-0.4,2) node {$\bsm3\\2\\1\esm$} ;
\draw (3.2,2) node {$1[1]$} ;
\draw (6.8,2) node {$2[1]$} ;
\draw (10.4,2) node {$3[1]$} ;
\draw (-2.2,0) node {$\bsm2\\1\esm$} ;
\draw (1.4,0) node {$\bsm3\\2\esm$} ;
\draw (5,0) node {$\bsm2\\1\esm[1]$} ;
\draw (8.6,0) node {$\bsm3\\2\esm[1]$} ;
\draw (-4,-2) node {$1$} ;
\draw (-0.4,-2) node {$2$} ;
\draw (3.2,-2) node {$3$} ;
\draw (6.8,-2) node {$\bsm3\\2\\1\esm[1]$} ;
%
\end{tikzpicture}
\]
\begin{itemize}
\item[(c.1)] One can see that $\mathcal{P(C)}=\add(1\oplus\bsm2\\1\esm\oplus\bsm3\\2\\1\esm\oplus \bsm3\\2\\1\esm[1])$, $\mathcal{C}$ has enough projective objects, and $\gl\mathcal{C}=2$.
\item[(c.2)]
Let
$\mathcal{D}=\add(\bsm3\\2\\1\esm [1])$.
Since $\bsm3\\2\\1\esm [1]$ is both-projective and injective in $\mathcal{C}$, we obtain the quotient extriangulated category $(\mathcal{C}/\mathcal{D},\mathbb{F}/\mathcal{D},\mathfrak{t}/\mathcal{D})$ from the extriangulated category $(\mathcal{C},\mathbb{F},\mathfrak{t})$ (see \cite[Proposition 3.30]{Na}).
Its Auslander-Reiten quiver is as follows

\[
\begin{tikzpicture}[scale=0.5, fl/.style={->,>=latex}]
\foreach \x in {-1,0,1,2} {
   \draw[fl] (3.6*\x+1.8,0.5) -- (3.6*\x+2.8,1.5) ;
};
\foreach \x in {-1,0,1} {
   \draw[fl] (3.6*\x,-1.5) -- (3.6*\x+1,-0.5) ;
};
\foreach \x in {0,1,2} {
   \draw[fl] (3.6*\x,1.5) -- (3.6*\x+1,0.5) ;
   \draw[fl, dashed] (3.6*\x+2.4,2) -- (3.6*\x+0.4,2) ;
   \draw[fl, dashed] (3.6*\x+0.6,0) -- (3.6*\x-1.4,0) ;
};
\foreach \x in {0,1} {
   \draw[fl] (3.6*\x-1.8,-0.5) -- (3.6*\x-0.8,-1.5) ;
   \draw[fl, dashed] (3.6*\x-1.2,-2) -- (3.6*\x-3.2,-2) ;
};
\draw (-0.4,2) node {$\bsm3\\2\\1\esm$} ;
\draw (3.2,2) node {$1[1]$} ;
\draw (6.8,2) node {$2[1]$} ;
\draw (10.4,2) node {$3[1]$} ;
\draw (-2.2,0) node {$\bsm2\\1\esm$} ;
\draw (1.4,0) node {$\bsm3\\2\esm$} ;
\draw (5,0) node {$\bsm2\\1\esm[1]$} ;
\draw (8.6,0) node {$\bsm3\\2\esm[1]$} ;
\draw (-4,-2) node {$1$} ;
\draw (-0.4,-2) node {$2$} ;
\draw (3.2,-2) node {$3$} ;
%
\end{tikzpicture}
\]
One can see that $\mathcal{P(C/\mathcal{D})}=\add(1\oplus\bsm2\\1\esm\oplus\bsm3\\2\\1\esm)$, $C/\mathcal{D}$ has enough projective objects, and $\gl\mathcal{C/\mathcal{D}}=2$.
\end{itemize}

\end{itemize}
 \end{example}

The following result is useful in the sequel.
\begin{lemma}\label{lem-3term}
Let $\mathcal{C}$ be an extriangulated category, and let
$$\xymatrix{A_{1}\ar[r]&A_{2}\ar[r]&A_{3} \ar@{-->}[r]&}$$
be an $\mathbb{E}$-triangle in $\mathcal{C}$. Then we have the following statements.
\begin{itemize}
\item[(1)] $\pd_{\mathcal{C}}A_{2}\leq\max \{\pd_{\mathcal{C}}A_{1},\pd_{\mathcal{C}}A_{3}\}$.
\item[(2)] $\pd_{\mathcal{C}}A_{3}\leq\max\{\pd_{\mathcal{C}}A_{1}+1, \pd_{\mathcal{C}}A_{2}\}$.
\item[(3)] $\pd_{\mathcal{C}}A_{1}\leq\max\{\pd_{\mathcal{C}}A_{2}, \pd_{\mathcal{C}}A_{3}-1\}$.
\end{itemize}
\end{lemma}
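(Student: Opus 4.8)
The plan is to prove all three inequalities by a single dimension-shifting argument, using the fact (guaranteed by our standing hypothesis) that $\mathcal{C}$ has enough projectives, so that every object admits an $\mathbb{E}$-triangle sequence built from projectives, and projective dimension is controlled by the first syzygy that becomes projective. The essential input is the following syzygy comparison: given the $\mathbb{E}$-triangle $A_1\to A_2\to A_3\dashrightarrow$, one can relate the syzygies of $A_1$, $A_2$, $A_3$ via a Horseshoe-type construction. Concretely, if $P_1\to A_1\dashrightarrow$ (with kernel $\Omega A_1$) and $P_3\to A_3\dashrightarrow$ (with kernel $\Omega A_3$) are deflations from projectives, then using projectivity of $P_3$ against the deflation $A_2\to A_3$ and the additive realization axioms (ET3), (ET4), one obtains a deflation $P_1\oplus P_3\to A_2$ together with an $\mathbb{E}$-triangle $\Omega A_1\to \Omega(A_2)\to \Omega A_3\dashrightarrow$ relating the three syzygies; here $\Omega(A_2)$ denotes the corresponding syzygy of $A_2$ with respect to $P_1\oplus P_3$. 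This is the extriangulated Horseshoe Lemma, which I would either cite from the literature on extriangulated categories or establish in a short preliminary step using (ET4) and (ET4)$^{\mathrm{op}}$.

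Granting the Horseshoe construction, the three statements follow by induction on $\max$ of the relevant projective dimensions. For (1): if both $\pd_{\mathcal{C}}A_1$ and $\pd_{\mathcal{C}}A_3$ are finite, say bounded by $n$, then after $n$ steps of the syzygy $\mathbb{E}$-triangle $\Omega^k A_1\to \Omega^k(A_2)\to \Omega^k A_3\dashrightarrow$ both outer terms are projective, hence (projectives being closed under the relevant operations, and a conflation with projective outer terms splitting appropriately) $\Omega^n(A_2)$ is projective, giving $\pd_{\mathcal{C}}A_2\le n$. For (2): apply (1)-type reasoning but note that $A_3$ sits as the cone, so a projective resolution of $A_1$ of length $\le\pd A_1$ together with one of $A_2$ of length $\le\pd A_2$ splices, via the connecting $\mathbb{E}$-triangle, into a resolution of $A_3$ of length $\le\max\{\pd A_1+1,\pd A_2\}$; the $+1$ comes from the shift incurred when $A_1$ is resolved and then mapped into the resolution of $A_2$. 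Statement (3) is dual/rearranged: from the same syzygy $\mathbb{E}$-triangle $\Omega A_1\to \Omega(A_2)\to \Omega A_3\dashrightarrow$, one reads off that $\Omega A_1$ is a ``cocone'' term, and induction gives $\pd_{\mathcal{C}}A_1\le\max\{\pd_{\mathcal{C}}A_2,\pd_{\mathcal{C}}A_3-1\}$; alternatively (3) follows formally from (2) by treating the $\mathbb{E}$-triangle as producing $A_1$ as a cocone and reindexing. In all three cases, if the right-hand side is $\infty$ there is nothing to prove, so one only argues the finite case.

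The main obstacle I anticipate is the Horseshoe step itself: unlike in an exact or triangulated category, in a general extriangulated category one must be careful that the morphism $P_1\oplus P_3\to A_2$ produced really is a deflation (this uses Condition (WIC) together with (ET4)) and that the induced sequence of syzygies is genuinely an $\mathbb{E}$-triangle rather than merely a complex; verifying the compatibilities (i)--(iii) in (ET4) in this setting is where the real work lies. A secondary technical point is checking that syzygies are well-defined up to the ambiguity of a projective summand so that $\pd_{\mathcal{C}}$ is insensitive to the choices made — this is routine using Schanuel-type arguments valid in extriangulated categories with enough projectives, but it must be invoked. Once these preliminaries are in place, the three inequalities are a short simultaneous induction.
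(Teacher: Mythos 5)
Your plan for part (1) is essentially the paper's own route: the paper also reduces (1) to a horseshoe/syzygy-comparison statement, except that it does not reprove it but cites \cite[Lemmas 4.14 and 4.15]{HZZ20P}; your intention to build the syzygy $\mathbb{E}$-triangle $\Omega A_1\to\Omega(A_2)\to\Omega A_3\dashrightarrow$ and induct, with Schanuel-type insensitivity to choices, is the same argument.

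For (2) and (3), however, there are genuine gaps. In (2) you splice resolutions ``via the connecting $\mathbb{E}$-triangle'': in an extriangulated category there is no suspension, hence no connecting morphism or connecting triangle, and you give no construction showing that the resulting mapping-cone-style complex of projectives is an $\mathbb{E}$-triangle sequence -- this is exactly the kind of verification that is not routine here, and as stated the step fails. The paper's proof avoids it entirely: take a projective deflation $P_0\to A_2$, compose with the deflation $A_2\to A_3$, and apply (ET4)$^{\rm op}$ to obtain $\mathbb{E}$-triangles $\Omega A_2\to K\to A_1\dashrightarrow$ and $K\to P_0\to A_3\dashrightarrow$; then (1) gives $\pd_{\mathcal{C}}K\leq\max\{\pd_{\mathcal{C}}\Omega A_2,\pd_{\mathcal{C}}A_1\}$ and (2) follows in one step. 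In (3), your induction on the syzygy triangle does not close in the corner case $\pd_{\mathcal{C}}A_2=0$: for instance if $A_2$ is projective and $\pd_{\mathcal{C}}A_3=1$, the desired conclusion is that $A_1$ is projective, but the syzygy triangle only yields that $\Omega A_1$ is projective, i.e.\ $\pd_{\mathcal{C}}A_1\leq 1$; closing this requires a genuine Schanuel comparison beyond the ``well-definedness of syzygies'' you invoke, or, as the paper does, the base-change diagram of \cite[Proposition 3.15]{Na} applied to $A_1\to A_2\to A_3\dashrightarrow$ and a projective deflation $\Omega A_3\to P\to A_3\dashrightarrow$: this produces $\mathbb{E}$-triangles $\Omega A_3\to M\to A_2\dashrightarrow$ and $A_1\to M\to P\dashrightarrow$, the latter splits since $P$ is projective, so $A_1$ is a direct summand of $M$ and (1) finishes. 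Finally, your fallback claim that ``(3) follows formally from (2) by reindexing'' is not available: $\mathbb{E}$-triangles cannot be rotated in a general extriangulated category, which is precisely why the paper reaches for \cite[Proposition 3.15]{Na} instead of a formal rotation.
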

\begin{proof}
(1) Apply \cite[Lemmas 4.14 and 4.15]{HZZ20P}.

(2) Apply (ET4)$\rm^{op}$ (see \cite[Remark 2.22]{Na}) and (1).

(3) Apply \cite[Proposition 3.15]{Na} and (1).
\end{proof}

Let $(\mathcal{A},\mathcal{B},\mathcal{C})$
 be a recollement of extriangulated categories. Set
$$\gl_{\mathcal{A}}\mathcal{B}:=\{\pd_{\mathcal{B}}i_{*}(A)\mid A\in \mathcal{A}\}.$$
Clearly, $\gl_{\mathcal{A}}\mathcal{B}\leq \gl\mathcal{B}$.

\begin{lemma}\label{lem-j_{!}}
Let $(\mathcal{A},\mathcal{B},\mathcal{C})$ be a recollement of extriangulated categories, and let $C\in\mathcal{C}$. Then ${\pd_{\mathcal{B}}}j_{!}(C)\leq {\pd_{\mathcal{C}}}C+{\gl _{\mathcal{A}}}\mathcal{B}+1$.
\end{lemma}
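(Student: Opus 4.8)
The plan is to build a projective resolution of $j_!(C)$ in $\mathcal{B}$ by applying the exact-on-conflations behaviour of $j_!$ to a projective resolution of $C$ in $\mathcal{C}$, and then to correct the failure of $j_!$ to carry projectives to projectives using the $\mathbb{E}$-triangles that link $j_!$ with the $i_*$-part of $\mathcal{B}$. First I would invoke Lemma~\ref{lem-rec}(3$'$): $j_!$ preserves projective objects, so in fact $j_!(P)\in\mathcal{P}(\mathcal{B})$ for every $P\in\mathcal{P}(\mathcal{C})$. Wait—this already looks too strong, so the real point must be subtler: although $j_!(P)$ is projective, an $\mathbb{E}$-triangle sequence $P_\bullet\to C$ in $\mathcal{C}$ need not be sent by the right exact functor $j_!$ to an $\mathbb{E}$-triangle sequence in $\mathcal{B}$; right exactness of $j_!$ only guarantees that each conflation $K_{n+1}\to P_n\to K_n\dashrightarrow$ is sent to a \emph{right exact} sequence $j_!(K_{n+1})\to j_!(P_n)\to j_!(K_n)$, i.e. there is an $\mathbb{E}$-triangle $N_n\to j_!(P_n)\to j_!(K_n)\dashrightarrow$ together with a compatible deflation $j_!(K_{n+1})\twoheadrightarrow N_n$. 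So the strategy is to control the ``error terms'' $N_n$ and the cocones of those deflations.

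The key step is to analyze $\cocone\bigl(j_!(K_{n+1})\twoheadrightarrow N_n\bigr)$. Since $i^*j_! = 0$ by Lemma~\ref{lem-rec}(2), applying the right exact functor $i^*$ to the right exact sequence $j_!(K_{n+1})\to j_!(P_n)\to j_!(K_n)$ should force these error cocones to lie in $\Im i_* = \Ker j^*$ (Lemma~\ref{lem-rec}(1),(R2)); more precisely, one expects an $\mathbb{E}$-triangle $i_*(A_n)\to j_!(K_{n+1})\to N_n\dashrightarrow$ for some $A_n\in\mathcal{A}$, using that $j^*j_!\cong\Id_{\mathcal{C}}$ so that $j^*$ applied to the sequence recovers the original conflation in $\mathcal{C}$ and the discrepancy is killed by $j^*$. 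Splicing all the resulting $\mathbb{E}$-triangles together, each $j_!(P_n)$ sits in an $\mathbb{E}$-triangle with an object built from $j_!(K_{n+1})$ and from $i_*(A_n)$; iterating $n = \pd_{\mathcal{C}}C$ times terminates the $j_!$-part after $\pd_{\mathcal{C}}C$ steps (as $j_!$ sends the final conflation $0\to P_{\pd C}\to K_{\pd C-1}$-type data to something projective-up-to an $i_*(A)$ term), leaving only the $i_*(A_n)$ contributions, each of projective dimension at most $\gl_{\mathcal{A}}\mathcal{B}$ by definition of $\gl_{\mathcal{A}}\mathcal{B}$. Then repeated use of Lemma~\ref{lem-3term}, especially part~(2) which costs a ``$+1$'' when passing to a cone, assembles the bound: one $+1$ is incurred to absorb all the $i_*(A_n)$ corrections into a single estimate, giving $\pd_{\mathcal{B}}j_!(C)\le \pd_{\mathcal{C}}C + \gl_{\mathcal{A}}\mathcal{B} + 1$.

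Concretely I would argue by induction on $d := \pd_{\mathcal{C}}C$. If $d=0$, then $C\in\mathcal{P}(\mathcal{C})$, so $j_!(C)\in\mathcal{P}(\mathcal{B})$ by Lemma~\ref{lem-rec}(3$'$) and $\pd_{\mathcal{B}}j_!(C)=0\le \gl_{\mathcal{A}}\mathcal{B}+1$. For $d\ge 1$, pick an $\mathbb{E}$-triangle $K\to P\to C\dashrightarrow$ with $P\in\mathcal{P}(\mathcal{C})$ and $\pd_{\mathcal{C}}K\le d-1$. Apply right exactness of $j_!$ to get an $\mathbb{E}$-triangle $N\to j_!(P)\to j_!(C)\dashrightarrow$ with a compatible deflation $j_!(K)\twoheadrightarrow N$; let $i_*(A)\to j_!(K)\to N\dashrightarrow$ be the associated $\mathbb{E}$-triangle (here $A\in\mathcal{A}$, to be justified via $i^*j_!=0$, $j^*j_!\cong\Id$, and (R2) as above). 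By induction $\pd_{\mathcal{B}}j_!(K)\le (d-1)+\gl_{\mathcal{A}}\mathcal{B}+1 = d+\gl_{\mathcal{A}}\mathcal{B}$, and $\pd_{\mathcal{B}}i_*(A)\le \gl_{\mathcal{A}}\mathcal{B}$ by definition; hence by Lemma~\ref{lem-3term}(2) applied to $i_*(A)\to j_!(K)\to N\dashrightarrow$,
$$\pd_{\mathcal{B}}N\le\max\{\pd_{\mathcal{B}}i_*(A)+1,\ \pd_{\mathcal{B}}j_!(K)\}\le d+\gl_{\mathcal{A}}\mathcal{B}.$$
Finally, since $j_!(P)\in\mathcal{P}(\mathcal{B})$ and $N\to j_!(P)\to j_!(C)\dashrightarrow$ is an $\mathbb{E}$-triangle, Lemma~\ref{lem-3term}(2) gives $\pd_{\mathcal{B}}j_!(C)\le\max\{\pd_{\mathcal{B}}N+1,\pd_{\mathcal{B}}j_!(P)\}\le d+\gl_{\mathcal{A}}\mathcal{B}+1$, as desired. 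The main obstacle I anticipate is the bookkeeping in that middle step: rigorously producing the $\mathbb{E}$-triangle $i_*(A)\to j_!(K)\to N\dashrightarrow$ with $A\in\mathcal{A}$ from the right-exact-functor data for $j_!$ — i.e. identifying the cocone of the compatible deflation $j_!(K)\twoheadrightarrow N$ with an object of $\Im i_*$ — which requires carefully combining $i^*j_!=0$, the isomorphism $j^*j_!\cong\Id_{\mathcal{C}}$, Condition~(WIC), and the (R2)/(R5) structure of the recollement, rather than any deep computation.
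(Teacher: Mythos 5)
Your proposal is correct and follows essentially the same route as the paper's proof: induction on $\pd_{\mathcal{C}}C$, applying right exactness of $j_!$ to a conflation $K\to P\to C$ with $P$ projective, identifying the cocone of the compatible deflation $j_!(K)\to N$ with an object of the form $i_*(A)$, and then assembling the bound with Lemma \ref{lem-3term}. The step you flag as the main obstacle is handled in the paper exactly along the lines you anticipate: since $j^*j_!\cong\Id_{\mathcal{C}}$ and $j^*$ is exact, Condition (WIC) together with compatibility forces $j^*$ applied to that deflation to be an isomorphism, so $j^*$ of the cocone vanishes and (R2) places it in $\Im i_*$, giving $\pd_{\mathcal{B}}$ of the cocone at most $\gl_{\mathcal{A}}\mathcal{B}$.
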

\begin{proof}
If ${\gl _{\mathcal{A}}}\mathcal{B}=\infty$ or ${\pd_{\mathcal{C}}}C=\infty$, there is nothing to prove.
Assume that ${\gl _{\mathcal{A}}}\mathcal{B}=n$.
We proceed by induction on the projective dimension of $C$.
If $C$ is projective, then $j_{!}(C)\in \mathcal{P(B)}$ by Lemma \ref{lem-rec}, and so the result follows.
Now suppose $\pd_{\mathcal{C}}C=m\geq 1$.
Consider the following $\mathbb{E}$-triangle sequence
$$\xymatrix@C=15pt{P_{m}\ar[rr]&&\cdots\ar[rr]&&P_{2}\ar[rr]&&P_{1}\ar[rr]\ar[rd]&&P_{0}\ar[rr]^{g}&&C\\
&&&&&&&K_{1}\ar[ru]_{f}&&&}$$
in $\mathcal{C}$ with $P_{i}\in \mathcal{P(C)}$ for $0\leq i\leq m$.
Notice that $\pd_{\mathcal{C}}K_{1}\leq m-1$, by induction hypothesis, we have
$$\pd_{\mathcal{B}}j_{!}(K_{1})\leq \pd_{\mathcal{C}}K_{1}+n+1\leq m-1+n+1=m+n.$$
Since $j_{!}$ is right exact, there is an $\mathbb{E}$-triangle $K_{1}'\stackrel{h_{2}}{\longrightarrow}j_{!}(P_{0})\stackrel{j_{!}(g)}{\longrightarrow}j_{!}(C)\stackrel{}\dashrightarrow$
in $\mathcal{B}$ and a deflation $\xymatrix@C=15pt{h_{1}:j_{!}(K_{1})\ar[r]&K'_{1}}$ which is compatible, such that $j_{!}(f)=h_{2}h_{1}$.

Since $j^{*}j_{!}\Rightarrow {\rm Id}_{\mathcal{C}}$ by Lemma \ref{lem-rec},
$$\xymatrix@C=20pt{j^{*}j_{!}(K_{1})\ar[r]^{j^{*}j_{!}(f)}&j^{*}j_{!}(P_{0})\ar[r]^{j^{*}j_{!}(g)}&j^{*}j_{!}(C)\ar@{-->}[r]&}$$
is an $\mathbb{E}$-triangle in $\mathcal{C}$.
Since $f=j^{*}j_{!}(f)=(j^{*}(h_{2}))(j^{*}(h_{1}))$, so $j^{*}(h_{1})$ is an inflation by Condition \ref{WIC}.
Notice that $j^{*}(h_{1})$ is a deflation and compatible since $j^{*}$ is exact, so $j^{*}(h_{1})$ is an isomorphism.
So $j^{*}j_{!}(K_{1})\cong j^{*}(K'_{1})$.
Set $K''_{1}=\cocone (h_{1})$, consider the following $\mathbb{E}$-triangle
\begin{align}\label{E-triangle-1}
\xymatrix@C=20pt{K''_{1}\ar[r]&j_{!}(K_{1})\ar[r]^{h_1}&K'_{1}\ar@{-->}[r]&}
\end{align}
in $\mathcal{B}$.
Since $j^{*}$ is exact,  $j^{*}(K''_{1})=0$. By (R2), there exists an object $A'\in \mathcal{A}$ such that $K''_{1}\cong i_{*}(A')$.
Then $\pd_{\mathcal{B}}K''_{1}\leq n$ by assumption.
Apply Lemma \ref{lem-3term} to the $\mathbb{E}$-triangle (\ref{E-triangle-1}),
we have $\pd_{\mathcal{B}}K'_{1}\leq m+n$.
It follows that $\pd_{\mathcal{B}}j_{!}(C)\leq m+n+1$ from the fact that $j_{!}(P_{0})\in \mathcal{P(B)}$.
 \end{proof}

\begin{theorem}\label{main-gl}
Let $(\mathcal{A},\mathcal{B},\mathcal{C})$ be a recollement of extriangulated categories, and let $C\in\mathcal{C}$. Then we have the following statements.
\begin{itemize}
\item[(1)]
\begin{itemize}
\item[(a)] ${\gl \mathcal{B}}\leq {\gl _{\mathcal{A}}}\mathcal{B}+\gl \mathcal{C}+1$.
\item[(b)] ${\gl _{\mathcal{A}}}\mathcal{B}\leq \gl \mathcal{A}+\sup\{{\pd_{\mathcal{B}}}i_{*}(P)\mid P\in \mathcal{P}(\mathcal{A})\}$.
\item[(c)] $\gl\mathcal{B}\leq \gl \mathcal{A}+\gl\mathcal{C}+\sup\{\pd_{\mathcal{B}}i_{*}(P)\mid P\in \mathcal{P(A)}\}+1.$
\item [(d)] $\gl \mathcal{C}\leq \gl \mathcal{B}+\sup\{{\pd_{\mathcal{C}}}j^{*}(P)\mid P\in \mathcal{P}(\mathcal{B})\}$.
\end{itemize}

\item[(2)] Assume that $i^{!}$ is exact, then
\begin{itemize}
\item[(a)] $\gl\mathcal{B}\leq \gl \mathcal{A}+\gl\mathcal{C}+1.$
\item[(b)] $\gl \mathcal{C}\leq \gl\mathcal{B}.$
\end{itemize}
\end{itemize}

\end{theorem}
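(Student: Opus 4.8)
The plan is to deduce all six bounds from Lemma~\ref{lem-3term}, Lemma~\ref{lem-j_{!}}, the defining right exact $\mathbb{E}$-triangle sequence (R5), and the preservation statements of Lemma~\ref{lem-rec}. First I would observe that (1)(c) is nothing but (1)(a) combined with (1)(b), and that (2)(a) and (2)(b) will follow from (1)(c) and (1)(d) as soon as the two ``correction terms'' $\sup\{\pd_{\mathcal{B}}i_{*}(P)\mid P\in\mathcal{P}(\mathcal{A})\}$ and $\sup\{\pd_{\mathcal{C}}j^{*}(P)\mid P\in\mathcal{P}(\mathcal{B})\}$ are shown to vanish when $i^{!}$ is exact. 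So the real work lies in (1)(a), (1)(b) and (1)(d).

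For (1)(a), fix $B\in\mathcal{B}$. By (R5) there is a right exact $\mathbb{E}$-triangle sequence $i_{*}(A')\to j_{!}j^{*}(B)\to B\to i_{*}i^{*}(B)$, which by definition breaks into two $\mathbb{E}$-triangles $i_{*}(A')\to j_{!}j^{*}(B)\to K\dashrightarrow$ and $K\to B\to i_{*}i^{*}(B)\dashrightarrow$. Applying Lemma~\ref{lem-3term}(2) to the first one and using $\pd_{\mathcal{B}}i_{*}(A')\le\gl_{\mathcal{A}}\mathcal{B}$ (by definition of $\gl_{\mathcal{A}}\mathcal{B}$) together with $\pd_{\mathcal{B}}j_{!}j^{*}(B)\le\pd_{\mathcal{C}}j^{*}(B)+\gl_{\mathcal{A}}\mathcal{B}+1\le\gl\mathcal{C}+\gl_{\mathcal{A}}\mathcal{B}+1$ (Lemma~\ref{lem-j_{!}}) gives $\pd_{\mathcal{B}}K\le\gl\mathcal{C}+\gl_{\mathcal{A}}\mathcal{B}+1$. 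Feeding this, and the bound $\pd_{\mathcal{B}}i_{*}i^{*}(B)\le\gl_{\mathcal{A}}\mathcal{B}$, into Lemma~\ref{lem-3term}(1) for the second $\mathbb{E}$-triangle yields $\pd_{\mathcal{B}}B\le\gl_{\mathcal{A}}\mathcal{B}+\gl\mathcal{C}+1$; taking the supremum over $B$ gives (a).

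For (1)(b) and (1)(d) I would argue by induction on projective dimension. Set $s:=\sup\{\pd_{\mathcal{B}}i_{*}(P)\mid P\in\mathcal{P}(\mathcal{A})\}$; the base case $\pd_{\mathcal{A}}A=0$ holds by definition of $s$. If $\pd_{\mathcal{A}}A=m\ge1$, choose (enough projectives) an $\mathbb{E}$-triangle $K\to P_{0}\to A\dashrightarrow$ with $P_{0}\in\mathcal{P}(\mathcal{A})$; Lemma~\ref{lem-3term}(3) gives $\pd_{\mathcal{A}}K\le m-1$. Since $i_{*}$ is exact, $i_{*}(K)\to i_{*}(P_{0})\to i_{*}(A)\dashrightarrow$ is an $\mathbb{E}$-triangle, and Lemma~\ref{lem-3term}(2) combined with the inductive bound $\pd_{\mathcal{B}}i_{*}(K)\le(m-1)+s$ and with $\pd_{\mathcal{B}}i_{*}(P_{0})\le s$ yields $\pd_{\mathcal{B}}i_{*}(A)\le m+s=\pd_{\mathcal{A}}A+s$; taking the supremum over $A$ proves $\gl_{\mathcal{A}}\mathcal{B}\le\gl\mathcal{A}+s$, which is (b). Part (d) is the same argument with the exact functor $j^{*}$ in place of $i_{*}$, where at the end one uses that $j^{*}$ is dense (Lemma~\ref{lem-rec}(1)), so that running over all $B\in\mathcal{B}$ the objects $j^{*}(B)$ exhaust $\mathcal{C}$ up to isomorphism.

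Finally, for (2): if $i^{!}$ is exact then $i_{*}$ preserves projectives by Lemma~\ref{lem-rec}(4), so $s=0$ and (1)(c) reduces to (2)(a); moreover $i^{!}$ exact forces $j_{*}$ exact by Lemma~\ref{lem-rec}(5), hence $j^{*}$ preserves projectives by Lemma~\ref{lem-rec}(4), so $\sup\{\pd_{\mathcal{C}}j^{*}(P)\mid P\in\mathcal{P}(\mathcal{B})\}=0$ and (1)(d) reduces to (2)(b). I do not anticipate a deep obstacle here; the only points that need attention are the routine bookkeeping of the maxima in (1)(a) --- ensuring that $\gl\mathcal{C}+\gl_{\mathcal{A}}\mathcal{B}+1$ dominates the other terms, which holds since $\gl\mathcal{C}\ge0$ --- and the trivial treatment of the degenerate cases in which one of the dimensions involved is infinite.
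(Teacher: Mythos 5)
Your proposal is correct and follows essentially the same route as the paper: (1)(a) via the two $\mathbb{E}$-triangles coming from (R5) together with Lemma~\ref{lem-j_{!}} and Lemma~\ref{lem-3term}, (1)(b) and (1)(d) by pushing projective resolutions through the exact functors $i_*$ and $j^*$, and (2) from the projectivity-preservation statements of Lemma~\ref{lem-rec}. The only cosmetic differences are that you dimension-shift one syzygy at a time instead of applying the functor to a full finite resolution, and in (1)(d) you invoke density of $j^*$ where the paper uses $j^*j_!\cong \mathrm{Id}_{\mathcal{C}}$ applied to a resolution of $j_!(C)$; both are equivalent.
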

\begin{proof}
(1) (a)
Suppose that $\gl_{\mathcal{A}}\mathcal{B}=n< \infty$ and $\gl \mathcal{C}=m< \infty$.
Let $B\in \mathcal{B}$.
By (R5),
there exists a commutative diagram
\begin{equation*}
\xymatrix{
  &i_{\ast}(A') \ar[r]&j_{!}j^{\ast}(B)\ar[rr]^-{\upsilon_B}\ar[dr]_{h_{2}}&  &B\ar[r]^-{h}&i_{\ast}i^{\ast}(B) &\\
           &                &       &  B' \ar[ur]_{h_{1}}& }
\end{equation*}
in $\mathcal{B}$ such that $i_{\ast}A'\stackrel{}{\longrightarrow}j_{!}j^{\ast}B\stackrel{h_{2}}{\longrightarrow}B'\stackrel{}\dashrightarrow$ and $B'\stackrel{h_{1}}{\longrightarrow}B\stackrel{h}{\longrightarrow}i_{\ast}i^{\ast}B\stackrel{}\dashrightarrow$ are $\mathbb{E}$-triangles and $h_{2}$ is compatible.
Notice that
$\pd_{\mathcal{B}}i_{*}(A')\leq n$ and $\pd_{\mathcal{B}}i_{*}i^{*}(B)\leq n$.
By Lemmas \ref{lem-3term} and \ref{lem-j_{!}},
\begin{align*}
\pd_{\mathcal{B}}B&\leq \max\{\pd_{\mathcal{B}}B',\pd_{\mathcal{B}}i_{*}i^{*}(B)\}\\
&\leq \max\{\pd_{\mathcal{B}}i_{*}(A')+1,\pd_{\mathcal{B}}j_{!}j^{*}(B),\pd_{\mathcal{B}}i_{*}i^{*}(B)\}\\
&\leq \max\{n+1,\pd_{\mathcal{C}}j^{*}(B)+n+1,n\}
\end{align*}
Notice that $\pd_{\mathcal{C}}j^{*}(B)\leq m$, so $\pd_{\mathcal{B}}B\leq m+n+1$.

(b) Suppose that $\sup\{{\pd_{\mathcal{B}}}i_{*}(P)\mid P\in \mathcal{P}(\mathcal{A})\}=n<\infty$ and $\gl \mathcal{A}=m< \infty$.
Let $A\in \mathcal{A}$.
If $A$ is projective, then $\pd_{\mathcal{B}}i_{*}(A)\leq n$ and our result holds.
Now suppose that $\pd_{\mathcal{A}}A=s\leq m$.
Consider the following $\mathbb{E}$-triangle sequence
$$\xymatrix{P'_{s}\ar[r]&P'_{s-1}\ar[r]&\cdots\ar[r]&P'_{1}\ar[r]&P'_{0}\ar[r]&A}$$
in $\mathcal{A}$ with $P'_{i}\in \mathcal{P(A)}$ for $0\leq i\leq s$.
Since $i_{*}$ is exact,
$$\xymatrix{i_{*}(P'_{s})\ar[r]&i_{*}(P'_{s-1})\ar[r]&\cdots\ar[r]&i_{*}(P'_{1})\ar[r]&i_{*}(P'_{0})\ar[r]&i_{*}(A)}$$
is an $\mathbb{E}$-triangle sequence in $\mathcal{B}$.
Notice that $\pd _{\mathcal{B}}i_{*}(P'_{i})\leq n$ by assumption,
so $\pd_{\mathcal{B}}i_{*}(A)\leq s+n \leq m+n$ by Lemma \ref{lem-3term}.

(c) It follows from (a) and (b).

(d) Suppose that $\gl\mathcal{B}= m<\infty$ and $\sup\{{\pd_{\mathcal{C}}}j^{*}(P)\mid P\in \mathcal{P}(\mathcal{B})\}=n<\infty$.
For any object $C\in \mathcal{C}$, $j_{!}(C)\in \mathcal{B}$. Assume that $\pd _{\mathcal{B}}j_!(C)=s\leq m$,
and consider the following $\mathbb{E}$-triangle sequence
$$\xymatrix{P_{s}\ar[r]&P_{s-1}\ar[r]&\cdots\ar[r]&P_{1}\ar[r]&P_{0}\ar[r]&j_{!}(C)}$$
in $\mathcal{B}$ with $P_{i}\in \mathcal{P(B)}$ for $0\leq i\leq s$.

Since $j^{*}$ is exact,
$$\xymatrix{j^{*}(P_{s})\ar[r]&j^{*}(P_{s-1})\ar[r]&\cdots\ar[r]&j^{*}(P_{1})\ar[r]&j^{*}(P_{0})\ar[r]&C(\cong j^{*}j_{!}(C))}$$
is an $\mathbb{E}$-triangle sequence in $\mathcal{C}$.
Notice that $\pd _{\mathcal{C}}j^{*}(P_{i})\leq n$ by assumption,
so $\pd_{\mathcal{C}}C\leq s+n \leq m+n$ by Lemma \ref{lem-3term}.

(2)
(a) Assume that $i^{!}$ is exact, then $i_{*}$ preserves projective objects by Lemma \ref{lem-rec}.
In this case, $\gl_{\mathcal{A}}\mathcal{B}\leq\gl \mathcal{A}$.
The assertion follows from (1)(a).

(b) Since $i^{!}$ is exact,
$j^{*}$ preserves projective objects by Lemma \ref{lem-rec}.
The assertion follows from (1)(d).
\end{proof}

Immediately, we have
\begin{corollary}
Let $(\mathcal{A},\mathcal{B},\mathcal{C})$ be a recollement of extriangulated categories. Then we have the following statements.
\begin{itemize}
\item[(1)] If $\gl \mathcal{C}< \infty$, then ${\gl _{\mathcal{A}}}\mathcal{B}< \infty$ if and only if $\gl \mathcal{B}< \infty$.
\item[(2)] If $\gl\mathcal{A}<\infty$, then
${\gl _{\mathcal{A}}}\mathcal{B}<\infty$ if and only if $\sup\{{\pd_{\mathcal{B}}}i_{*}(P)\mid P\in \mathcal{P}(\mathcal{A})\}<\infty$.
\item[(3)] If $\gl \mathcal{B}<\infty$, then
$\gl \mathcal{C}<\infty$ if and only if $\sup\{{\pd_{\mathcal{B}}}j^{*}(P)\mid P\in \mathcal{P}(\mathcal{B})\}<\infty$.
\end{itemize}
\end{corollary}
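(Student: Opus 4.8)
The plan is to read off all three equivalences directly from Theorem \ref{main-gl}, combined with two elementary observations that were already noted: $\gl_{\mathcal{A}}\mathcal{B}\le\gl\mathcal{B}$ (since $i_{*}(A)\in\mathcal{B}$ for every $A\in\mathcal{A}$), and, for the same reason, $\sup\{\pd_{\mathcal{B}}i_{*}(P)\mid P\in\mathcal{P}(\mathcal{A})\}\le\gl_{\mathcal{A}}\mathcal{B}$ as well as $\sup\{\pd_{\mathcal{C}}j^{*}(P)\mid P\in\mathcal{P}(\mathcal{B})\}\le\gl\mathcal{C}$, because each supremum on the left is taken over a subcategory. Each item is an ``if and only if'' in which one implication comes from a finiteness bound in Theorem \ref{main-gl} and the reverse implication is one of these restriction-of-supremum inequalities.

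For item (1), assume $\gl\mathcal{C}<\infty$. If $\gl_{\mathcal{A}}\mathcal{B}<\infty$, then Theorem \ref{main-gl}(1)(a) gives $\gl\mathcal{B}\le\gl_{\mathcal{A}}\mathcal{B}+\gl\mathcal{C}+1<\infty$; conversely $\gl\mathcal{B}<\infty$ forces $\gl_{\mathcal{A}}\mathcal{B}\le\gl\mathcal{B}<\infty$. For item (2), assume $\gl\mathcal{A}<\infty$. If $\sup\{\pd_{\mathcal{B}}i_{*}(P)\mid P\in\mathcal{P}(\mathcal{A})\}<\infty$, then Theorem \ref{main-gl}(1)(b) gives $\gl_{\mathcal{A}}\mathcal{B}\le\gl\mathcal{A}+\sup\{\pd_{\mathcal{B}}i_{*}(P)\mid P\in\mathcal{P}(\mathcal{A})\}<\infty$; conversely $\gl_{\mathcal{A}}\mathcal{B}<\infty$ bounds the supremum since $\mathcal{P}(\mathcal{A})\subseteq\mathcal{A}$. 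For item (3), assume $\gl\mathcal{B}<\infty$. If $\sup\{\pd_{\mathcal{C}}j^{*}(P)\mid P\in\mathcal{P}(\mathcal{B})\}<\infty$, then Theorem \ref{main-gl}(1)(d) gives $\gl\mathcal{C}\le\gl\mathcal{B}+\sup\{\pd_{\mathcal{C}}j^{*}(P)\mid P\in\mathcal{P}(\mathcal{B})\}<\infty$; conversely $\gl\mathcal{C}<\infty$ bounds the supremum since $\mathcal{P}(\mathcal{B})\subseteq\mathcal{B}$.

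I do not expect any genuine obstacle: the content of the corollary is entirely carried by Theorem \ref{main-gl}, and the only thing to be careful about is matching each direction of each equivalence to the correct bound so that both sides of every inequality are finite under the stated hypothesis. If anything deserves a remark it is that the reverse implications use the recollement structure only through the functors $i_{*}$ and $j^{*}$ landing in the appropriate categories, relying otherwise just on the monotonicity of projective dimension and of suprema along inclusions of subcategories.
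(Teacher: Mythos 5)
Your proposal is correct and coincides with the paper's own (unwritten) argument: the corollary is presented there as an immediate consequence of Theorem \ref{main-gl}(1)(a), (b), (d) together with the trivial inequalities $\gl_{\mathcal{A}}\mathcal{B}\le\gl\mathcal{B}$, $\sup\{\pd_{\mathcal{B}}i_{*}(P)\mid P\in\mathcal{P}(\mathcal{A})\}\le\gl_{\mathcal{A}}\mathcal{B}$ and $\pd_{\mathcal{C}}j^{*}(P)\le\gl\mathcal{C}$, exactly as you argue. One cosmetic remark: in item (3) the reverse implication holds because each $j^{*}(P)$ is an object of $\mathcal{C}$ and hence has projective dimension at most $\gl\mathcal{C}$, not because of the inclusion $\mathcal{P}(\mathcal{B})\subseteq\mathcal{B}$ as you phrased it.
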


\begin{corollary}{\rm(see \cite[Theorem 4.1 and Proposition 4.4]{PC14H})}
Let ($\mathcal{A}$, $\mathcal{B}$, $\mathcal{C}$) be a recollement of abelian categories.
Then
$$\gl\mathcal{B}\leq \gl \mathcal{A}+\gl\mathcal{C}+\sup\{\pd_{\mathcal{B}}i_{*}(P)\mid P\in \mathcal{P(A)}\}+1.$$
In addition, if $i^{!}$ is exact, then
$$\gl \mathcal{C}\leq \gl\mathcal{B}\leq \gl\mathcal{A}+\gl\mathcal{C}+1.$$
\end{corollary}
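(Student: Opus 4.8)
The plan is to obtain this corollary as a direct specialization of Theorem~\ref{main-gl}. The first thing I would do is observe that a recollement of abelian categories is in particular a recollement of extriangulated categories in the sense of Definition~\ref{def-rec}: every abelian category is an exact, hence extriangulated, category (Example~\ref{example-extri}(1)), and by the remark following Definition~\ref{exact functor} the notions of exact, left exact and right exact functor between abelian categories in the extriangulated sense coincide with the classical ones. Thus the six functors $i^{*},i_{*},i^{!},j_{!},j^{*},j_{*}$ together with the adjunctions and the sequences (R1)--(R5) of an abelian recollement give exactly a recollement of extriangulated categories; note also that condition (WIC) is vacuous here, since every morphism in an exact category is compatible, and enough projectives and injectives is part of the standing assumption under which these dimensions are defined.

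Next I would record the dictionary between the homological invariants. For an abelian category $\mathcal{C}$ the subcategory $\mathcal{P}(\mathcal{C})$ of projective objects in the extriangulated sense is precisely the usual class of projectives, and an $\mathbb{E}$-triangle sequence of projectives ending at an object $C$ is precisely a projective resolution of $C$; hence $\pd_{\mathcal{C}}C$ and $\gl\mathcal{C}$ as introduced in Section~3 agree with the classical projective dimension and global dimension (as noted immediately after the definition of global dimension). In particular $\gl_{\mathcal{A}}\mathcal{B}$, $\sup\{\pd_{\mathcal{B}}i_{*}(P)\mid P\in\mathcal{P}(\mathcal{A})\}$ and $\sup\{\pd_{\mathcal{C}}j^{*}(P)\mid P\in\mathcal{P}(\mathcal{B})\}$ become the quantities occurring in Psaroudakis's theorem.

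With this translation the first displayed inequality is literally Theorem~\ref{main-gl}(1)(c). For the supplementary statement, note that $i^{!}$ being exact in the abelian sense is, by the same remark after Definition~\ref{exact functor}, the same as being exact in the extriangulated sense, so Theorem~\ref{main-gl}(2)(a) gives $\gl\mathcal{B}\leq\gl\mathcal{A}+\gl\mathcal{C}+1$ and Theorem~\ref{main-gl}(2)(b) gives $\gl\mathcal{C}\leq\gl\mathcal{B}$; concatenating these two yields the chain $\gl\mathcal{C}\leq\gl\mathcal{B}\leq\gl\mathcal{A}+\gl\mathcal{C}+1$. There is essentially no genuine obstacle here: the only point requiring care is the verification that the abelian hypotheses do imply the extriangulated ones needed to invoke Theorem~\ref{main-gl}, which is exactly the bookkeeping carried out in the first two paragraphs, and everything else is a reference to results already established in the paper.
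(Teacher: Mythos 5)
Your proposal is correct and coincides with the paper's (implicit) argument: the corollary is simply the specialization of Theorem~\ref{main-gl}(1)(c), (2)(a) and (2)(b) to an abelian recollement viewed as a recollement of extriangulated categories, with the extriangulated projective/global dimensions reducing to the classical ones. One tiny quibble: condition (WIC) holds for abelian categories because monomorphisms and epimorphisms satisfy the required cancellation properties (not because ``every morphism is compatible''---compatibility is a separate notion, relevant to Definition~\ref{right}); this slip does not affect the argument.
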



\section{Extension dimensions and recollements}
Let $\mathcal{C}$ be an extriangulated category,
and let $\mathcal{U}_1,\mathcal{U}_2,\cdots,\mathcal{U}_n$ be subcategories of $\mathcal{C}$. Define
\begin{align*}
\mathcal{U}_1\diamond \mathcal{U}_2:={\add}\{A\in \mathcal{C}\mid {\rm there \;exists \;an\; \mathbb{E}\text{-}triangle\ \ \ \ \ \ \ \ \ \ \ \ \ \ \ \ \ \ \ \ \ \ \ \ \ \ \ \ \ \  }
\\
 \xymatrix{U_1\ar[r]&  A \ar[r]& U_2\ar@{-->}[r]&} \ {\rm in}\ \mathcal{C}\ {\rm with}\; U_1 \in \mathcal{U}_1 \;{\rm and}\;
U_2 \in \mathcal{U}_2\}.
\end{align*}
By (ET4) and (ET4)$^{\rm op}$, the operator $\diamond$ is associative, that is,
$$(\mathcal{U}_{1}\diamond\mathcal{U}_{2})\diamond\mathcal{U}_{3}=\mathcal{U}_{1}
\diamond(\mathcal{U}_{2}\diamond\mathcal{U}_{3}).$$
Then the subcategory $\mathcal{U}_{1}\diamond  \mathcal{U}_{2}\diamond \dots \diamond\mathcal{U}_{n}$ can be inductively defined as follows
\begin{align*}
\mathcal{U}_{1}\diamond  \mathcal{U}_{2}\diamond \dots \diamond\mathcal{U}_{n}:=
\add \{A\in \mathcal{C}\mid {\rm there \;exists \;an\; \mathbb{E}\text{-}triangle \ \ \ \ \ \ \ \ \ \ \ \ \ \ \ \ \ \ \ \ \ \ \ \ \ \ \ }\\ \xymatrix{U\ar[r]&  A \ar[r]& V\ar@{-->}[r]&}
{\rm\ in}\ \mathcal{C}\ {\rm with}\; U \in \mathcal{U}_{1} \;{\rm and}\;
V \in  \mathcal{U}_{2}\diamond \dots \diamond\mathcal{U}_{n}\}.
\end{align*}
For a class of objects $\mathcal{U}$ of $\mathcal{C}$, set
$\langle\mathcal{U}\rangle_{0}=0$, $\langle\mathcal{U}\rangle_{1}=\add\mathcal{U}$,
$\langle\mathcal{U}\rangle_{n}=\langle\mathcal{U}\rangle_1\diamond \langle\mathcal{U}\rangle_{n-1}$ for any $n\geq 2$,
and $\langle\mathcal{U}\rangle_{\infty}=\mathop{\bigcup}_{n\geq 0}\langle\mathcal{U}\rangle_{n}$.
If $T$ is an object in $\mathcal{C}$, we write $\langle T\rangle_{n}:=\langle \add T \rangle_{n}$.

Now, we introduce the notion of extension dimension of extriangulated categories, which unifies the notion of dimension of triangulated categories (\cite{RR08D}) and the extension dimension of abelian categories (see \cite{BA08S, DHL, ZJL}).
\begin{definition}
Let $\mathcal{C}$ be an extriangulated category. The extension dimension of $\mathcal{C}$, denoted by ${\rm ext}.\dim \mathcal{C}$, is defined as
$${\rm ext}.\dim \mathcal{C}:=\inf\{n\geq 0\mid \text{ there exists } T\in \mathcal{C}\text{ such that }\langle T\rangle_{n+1}=\mathcal{C}\}.$$
\end{definition}

\begin{lemma}\label{lemma-oplus}
Let $\mathcal{C}$ be an extriangulated category. Then
for any $T_{1},T_{2}\in \mathcal{C}$ and $m,n\geq 1$, we have
$\langle T_{1}\rangle_{m}\diamond \langle T_{2}\rangle_{n}\subseteq \langle T_{1}\oplus T_{2}\rangle_{m+n}$.
\end{lemma}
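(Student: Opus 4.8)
The plan is to prove the inclusion $\langle T_1\rangle_m\diamond\langle T_2\rangle_n\subseteq\langle T_1\oplus T_2\rangle_{m+n}$ by double induction, first on $m$ for the base case $n=1$, and then on $n$ for the general case. The key observation that makes everything go through is the obvious monotonicity $\langle T_i\rangle_k\subseteq\langle T_1\oplus T_2\rangle_k$ (since $\add T_i\subseteq\add(T_1\oplus T_2)$ and $\diamond$ is monotone in each argument), together with the associativity of $\diamond$ recorded in the excerpt and the recursive definition $\langle\mathcal{U}\rangle_k=\langle\mathcal{U}\rangle_1\diamond\langle\mathcal{U}\rangle_{k-1}$.

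First I would treat the case $n=1$ by induction on $m$. For $m=1$ this is just $\langle T_1\rangle_1\diamond\langle T_2\rangle_1\subseteq\langle T_1\oplus T_2\rangle_1\diamond\langle T_1\oplus T_2\rangle_1=\langle T_1\oplus T_2\rangle_2$, using monotonicity. For the inductive step, write $\langle T_1\rangle_m=\langle T_1\rangle_1\diamond\langle T_1\rangle_{m-1}$, so by associativity
$$\langle T_1\rangle_m\diamond\langle T_2\rangle_1=\langle T_1\rangle_1\diamond\bigl(\langle T_1\rangle_{m-1}\diamond\langle T_2\rangle_1\bigr)\subseteq\langle T_1\oplus T_2\rangle_1\diamond\langle T_1\oplus T_2\rangle_m=\langle T_1\oplus T_2\rangle_{m+1},$$
where the middle inclusion uses the induction hypothesis on $\langle T_1\rangle_{m-1}\diamond\langle T_2\rangle_1$ and monotonicity on the first factor. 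Then for general $n$ I would induct on $n$, the case $n=1$ being what was just established; for the step, write $\langle T_2\rangle_n=\langle T_2\rangle_1\diamond\langle T_2\rangle_{n-1}$ and compute, again via associativity,
$$\langle T_1\rangle_m\diamond\langle T_2\rangle_n=\bigl(\langle T_1\rangle_m\diamond\langle T_2\rangle_1\bigr)\diamond\langle T_2\rangle_{n-1}\subseteq\langle T_1\oplus T_2\rangle_{m+1}\diamond\langle T_1\oplus T_2\rangle_{n-1}=\langle T_1\oplus T_2\rangle_{m+n},$$
using the $n=1$ case on the parenthesized factor and monotonicity on the other, and finally the identity $\langle\mathcal{U}\rangle_{m+1}\diamond\langle\mathcal{U}\rangle_{n-1}=\langle\mathcal{U}\rangle_{m+n}$, which itself follows by an easy induction from the recursive definition and associativity of $\diamond$.

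The only genuinely delicate point is that the recursive law $\langle\mathcal{U}\rangle_a\diamond\langle\mathcal{U}\rangle_b=\langle\mathcal{U}\rangle_{a+b}$ holds for all $a,b\geq1$, not merely when one index is $1$; but this is a short separate induction using exactly the associativity of $\diamond$ guaranteed by (ET4) and (ET4)$^{\rm op}$ in the excerpt, and the fact that $\diamond$ applied to $\add$-closed subcategories yields an $\add$-closed subcategory. Apart from bookkeeping of the indices, there is no real obstacle — the associativity of $\diamond$, stated just before the definition of extension dimension, does all the heavy lifting, and monotonicity of $\diamond$ under enlarging $\mathcal{U}$ is immediate from the definitions. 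I expect the write-up to be essentially the two displayed computations above plus the remark that $\langle\mathcal{U}\rangle_a\diamond\langle\mathcal{U}\rangle_b=\langle\mathcal{U}\rangle_{a+b}$.
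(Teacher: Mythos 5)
Your proposal is correct and uses exactly the same ingredients as the paper's proof: the monotonicity $\langle T_i\rangle_k\subseteq\langle T_1\oplus T_2\rangle_k$ together with the additivity $\langle\mathcal{U}\rangle_a\diamond\langle\mathcal{U}\rangle_b=\langle\mathcal{U}\rangle_{a+b}$ coming from associativity of $\diamond$. The paper simply states this in two lines (applying monotonicity to both factors at once and then the additivity identity), whereas your double induction is an unfolded version of the same argument.
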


\begin{proof}
Clearly,
$\langle T_{1}\rangle_{m}\subseteq \langle T_{1}\oplus T_{2}\rangle_{m}$ and $\langle T_{2}\rangle_{n}\subseteq \langle T_{1}\oplus T_{2}\rangle_{n}$.
Thus
$\langle T_{1}\rangle_{m}\diamond \langle T_{2}\rangle_{n}\subseteq
\langle T_{1}\oplus T_{2}\rangle_{m}\diamond \langle T_{1}\oplus T_{2}\rangle_{n}
=\langle T_{1}\oplus T_{2}\rangle_{m+n}$.
\end{proof}


We need the following fact.

\begin{lemma}\label{lem-Fexact}
Let $F:\mathcal{A}\longrightarrow \mathcal{B}$ be an exact functor of extriangulated categories.
Then $F(\langle T\rangle_{n})\subseteq\langle F(T)\rangle_{n}$ for any $T\in \mathcal{A}$ and $n\geq 1$.
\end{lemma}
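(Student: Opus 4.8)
The plan is to induct on $n$. For the base case $n=1$, we have $\langle T\rangle_1 = \add T$, and since $F$ is additive it sends direct summands of finite direct sums of copies of $T$ to direct summands of finite direct sums of copies of $F(T)$, so $F(\add T)\subseteq \add F(T) = \langle F(T)\rangle_1$. (Strictly, $\langle\mathcal{U}\rangle_0 = 0$ and the case $n=0$ is trivial, but the statement only asks for $n\geq 1$.)

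For the inductive step, suppose the claim holds for $n-1$, and let $A\in \langle T\rangle_n = \langle T\rangle_1\diamond\langle T\rangle_{n-1}$. By definition of $\diamond$ and $\add$, it suffices to handle the case where $A$ itself fits into an $\mathbb{E}$-triangle $\xymatrix{U\ar[r]& A\ar[r]& V\ar@{-->}[r]&}$ in $\mathcal{A}$ with $U\in\langle T\rangle_1$ and $V\in\langle T\rangle_{n-1}$, because $F$ commutes with finite direct sums and preserves direct summands, so the general $A\in\add\{\cdots\}$ reduces to this case. Now I apply the exactness of $F$: by Definition \ref{exact functor}(2)(3), applying $F$ to this $\mathbb{E}$-triangle yields an $\mathbb{E}$-triangle $\xymatrix{F(U)\ar[r]& F(A)\ar[r]& F(V)\ar@{-->}[r]&}$ in $\mathcal{B}$ (with the connecting extension $\eta_{(V,U)}(\delta)$). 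By the base case $F(U)\in\langle F(T)\rangle_1$, and by the induction hypothesis $F(V)\in\langle F(T)\rangle_{n-1}$. Hence $F(A)$ lies in $\langle F(T)\rangle_1\diamond\langle F(T)\rangle_{n-1} = \langle F(T)\rangle_n$ by definition.

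Finally, one should note that $\langle F(T)\rangle_n$ is closed under direct summands and finite direct sums (it is of the form $\add(\cdots)$), which is exactly what lets us pass from the special case back to an arbitrary $A\in\langle T\rangle_n$, and also guarantees that the whole of $F(\langle T\rangle_n)$, not merely its "building blocks," lands in $\langle F(T)\rangle_n$.

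The routine points are the additivity bookkeeping around $\add$; the only place where the hypothesis genuinely enters is the single invocation of exactness of $F$ to transport an $\mathbb{E}$-triangle in $\mathcal{A}$ to one in $\mathcal{B}$ with the same outer terms' images, so I do not anticipate a real obstacle — the main thing to be careful about is tracking that $F$ respects the $\add$-closure at each stage so the induction is clean.
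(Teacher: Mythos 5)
Your proof is correct and follows essentially the same route as the paper's: induction on $n$, with additivity of $F$ handling the base case $\langle T\rangle_1=\add T$, and exactness of $F$ transporting the defining $\mathbb{E}$-triangle in the inductive step. The only cosmetic difference is the bookkeeping of the $\add$-closure: the paper absorbs it by choosing $Y'$ with $Y\oplus Y'$ itself fitting into an $\mathbb{E}$-triangle, while you reduce to such objects and pass back using that $\langle F(T)\rangle_n$ is closed under finite direct sums and summands, which is equivalent.
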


\begin{proof}
We proceed by induction on $n$.
Let $X\in F(\langle T\rangle_{1})$. Then $X=F(Y)$ for some $Y\in\langle T\rangle_{1}(=\add T)$,
there exist some $Z\in\mathcal{A}$ and some integer $l\geq 1$ such that
$Y\oplus Z\cong T^{l}$.
Then
$$X\oplus F(Z)=F(Y)\oplus F(Z)\cong F(Y\oplus Z)\cong F(T^{l})\cong F(T)^{l}.$$
So $X\in\langle F(T)\rangle_{1}$ and $F(\langle T\rangle_{1})\subseteq \langle F(T)\rangle_{1}$.
The case $n=1$ is proved.

Now let $X\in F(\langle T\rangle_{n})$ with $n\geq 2$. Then $X=F(Y)$ for some $Y\in \langle T\rangle_{n}$,
there exist some object $Y'\in \mathcal{A}$ and an $\mathbb{E}$-triangle
$$\xymatrix{T_{1} \ar[r]&Y\oplus Y'\ar[r]& T_{2} \ar@{-->}[r]&}$$
in $\mathcal{A}$ with $T_{1}\in \langle T\rangle_{1}$ and $T_{2}\in \langle T\rangle_{n-1}$.
Since $F$ is exact, we get the following $\mathbb{E}$-triangle
$$\xymatrix{F(T_{1}) \ar[r]& F(Y)\oplus F(Y')\ar[r]& F(T_{2}) \ar@{-->}[r]&}$$
in $\mathcal{B}$.
By the induction hypothesis, $F(T_{1})\in F(\langle T\rangle_{1})\subseteq\langle F(T)\rangle_{1}$ and
$F(T_{2})\in F(\langle T\rangle_{n-1})\subseteq\langle F(T)\rangle_{n-1}$. It follows that
$$X=F(Y)\in
\langle F(T)\rangle_{1}\diamond\langle F(T)\rangle_{n-1}=\langle F(T)\rangle_{n}.$$
Thus $F(\langle T\rangle_{n})\subseteq\langle F(T)\rangle_{n}$.
\end{proof}

Let $F:\mathcal{A}\longrightarrow \mathcal{B}$ be a functor of additive categories. Recall from \cite[p.629]{XCC06A}
that $F$ is called {\em quasi-dense} if for any $B\in \mathcal{B}$, there exists $A\in \mathcal{A}$ such that $B$
is isomorphic to a direct summand of $F(A)$. Obviously, any dense functor is quasi-dense.

\begin{lemma}\label{lem-dense}
Let $F:\mathcal{A}\longrightarrow \mathcal{B}$ be an exact functor of extriangulated categories.
If $F$ is quasi-dense, then ${\rm ext}.\dim\mathcal{A}\geq {\rm ext}.\dim \mathcal{B}$.
\end{lemma}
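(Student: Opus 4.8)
The plan is to show that if $\langle T\rangle_{n+1}=\mathcal{A}$ for some $T\in\mathcal{A}$, then $\langle F(T)\rangle_{n+1}=\mathcal{B}$, which immediately gives $\mathrm{ext}.\dim\mathcal{B}\le n$, and hence $\mathrm{ext}.\dim\mathcal{B}\le\mathrm{ext}.\dim\mathcal{A}$ after taking the infimum over all such $T$ and $n$. First I would dispose of the trivial case: if $\mathrm{ext}.\dim\mathcal{A}=\infty$ there is nothing to prove, so assume $\mathrm{ext}.\dim\mathcal{A}=n<\infty$ and pick $T\in\mathcal{A}$ with $\langle T\rangle_{n+1}=\mathcal{A}$.

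The key step is to transport this generating property across $F$. Fix an arbitrary $B\in\mathcal{B}$. Since $F$ is quasi-dense, there exist $A\in\mathcal{A}$ and $B'\in\mathcal{B}$ with $B\oplus B'\cong F(A)$. Because $A\in\mathcal{A}=\langle T\rangle_{n+1}$, Lemma \ref{lem-Fexact} yields $F(A)\in F(\langle T\rangle_{n+1})\subseteq\langle F(T)\rangle_{n+1}$. Now $\langle F(T)\rangle_{n+1}$ is closed under direct summands (by the very definition, each $\langle\mathcal{U}\rangle_m$ is an $\add$ of something, hence summand-closed), so $B\in\langle F(T)\rangle_{n+1}$. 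As $B$ was arbitrary, $\mathcal{B}=\langle F(T)\rangle_{n+1}$, whence $\mathrm{ext}.\dim\mathcal{B}\le n=\mathrm{ext}.\dim\mathcal{A}$.

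The only point requiring a little care — and the step I would flag as the main (albeit minor) obstacle — is making sure the summand-closure of $\langle F(T)\rangle_{n+1}$ is legitimately invoked: this rests on the fact that every set $\langle\mathcal{U}\rangle_m$ is defined by applying $\add$, together with the observation that the class $\{A\mid \text{there exists an }\mathbb{E}\text{-triangle }U\to A\to V\text{ with }U\in\mathcal{U}_1,\ V\in\mathcal{U}_2\diamond\cdots\}$ need not itself be summand-closed but its $\add$-closure is, which is exactly what the definition of $\diamond$ builds in. Everything else is formal: the exactness of $F$ is used only through Lemma \ref{lem-Fexact}, and quasi-density is used exactly once, to realize an arbitrary object of $\mathcal{B}$ as a summand of something in the image. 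I do not expect any difficulty beyond bookkeeping.
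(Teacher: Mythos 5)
Your proposal is correct and follows essentially the same route as the paper's own proof: write an arbitrary object of $\mathcal{B}$ as a direct summand of $F(A)$ via quasi-density, push $A\in\langle T\rangle_{n+1}$ through Lemma \ref{lem-Fexact}, and conclude by summand-closure of $\langle F(T)\rangle_{n+1}$. The only difference is that you make explicit the summand-closure step (built into the $\add$ in the definition of $\diamond$), which the paper uses implicitly.
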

\begin{proof}
Suppose ${\rm ext}.\dim \mathcal{A}=n$, that is, $\mathcal{A}=\langle T\rangle_{n+1}$ for some $T\in \mathcal{A}$.
Let $X\in\mathcal{B}$. Since $F$ is quasi-dense, we have $X\oplus X_{1}\cong F(Y)$ for some $Y\in\mathcal{A}$ and $X_{1}\in\mathcal{B}$.
It follows that $X\oplus X_{1}\in F(\mathcal{A})=  F(\langle T\rangle_{n+1})\subseteq\langle F(T)\rangle_{n+1}$ by Lemma \ref{lem-Fexact}.
So $X\in\langle F(T)\rangle_{n+1}$ and $\mathcal{B}\subseteq\langle F(T)\rangle_{n+1}$, which implies
${\rm ext}.\dim \mathcal{B}\leq n$.
Thus ${\rm ext}.\dim \mathcal{B}\leq {\rm ext}.\dim \mathcal{A}$.
\end{proof}

\begin{theorem}
Let $(\mathcal{A},\mathcal{B},\mathcal{C})$ be a recollement of extriangulated categories.
If either $i^{!}$ or $i^{*}$
is exact, then
$$\max\{{\rm ext}.\dim \mathcal{A},{\rm ext}.\dim \mathcal{C}\} \leq {\rm ext}.\dim \mathcal{B}\leq {\rm ext}.\dim \mathcal{A} +{\rm ext}.\dim \mathcal{C}+1.$$
\end{theorem}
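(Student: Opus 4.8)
The plan is to establish the two inequalities separately, using the structural $\mathbb{E}$-triangles from (R4) and (R5) together with the functor estimates in Lemmas \ref{lem-Fexact} and \ref{lem-dense}.

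For the lower bound $\max\{{\rm ext}.\dim\mathcal{A},{\rm ext}.\dim\mathcal{C}\}\leq{\rm ext}.\dim\mathcal{B}$, I would apply Lemma \ref{lem-dense} twice. The functor $j^{\ast}:\mathcal{B}\to\mathcal{C}$ is exact and dense (hence quasi-dense) by Lemma \ref{lem-rec}(1), so ${\rm ext}.\dim\mathcal{C}\leq{\rm ext}.\dim\mathcal{B}$. For $\mathcal{A}$, note that under the hypothesis one of $i^{!}$ or $i^{\ast}$ is exact; say $i^{\ast}$ is exact (the case of $i^{!}$ is dual, using the adjunction on the other side). Then $i^{\ast}:\mathcal{B}\to\mathcal{A}$ is exact, and it is dense by Lemma \ref{lem-rec}(1), hence quasi-dense, so Lemma \ref{lem-dense} gives ${\rm ext}.\dim\mathcal{A}\leq{\rm ext}.\dim\mathcal{B}$. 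If instead $i^{!}$ is exact, I use $i^{!}:\mathcal{B}\to\mathcal{A}$, which is exact and dense by Lemma \ref{lem-rec}(1). This half is essentially immediate once Lemma \ref{lem-dense} is in hand.

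For the upper bound, suppose ${\rm ext}.\dim\mathcal{A}=n$ and ${\rm ext}.\dim\mathcal{C}=m$, so $\mathcal{A}=\langle S\rangle_{n+1}$ and $\mathcal{C}=\langle S'\rangle_{m+1}$ for suitable objects $S\in\mathcal{A}$, $S'\in\mathcal{C}$. Set $T:=i_{\ast}(S)\oplus j_{?}(S')$ where $j_?$ is $j_{\ast}$ if we work with the (R4) triangle and $j_!$ if we work with (R5); assume $i^{\ast}$ is exact and use (R5) (if $i^{!}$ is exact use (R4) dually). By Lemma \ref{lem-rec}(6$'$), for each $B\in\mathcal{B}$ there is an $\mathbb{E}$-triangle $j_!j^{\ast}(B)\to B\to i_{\ast}i^{\ast}(B)\dashrightarrow$. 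Now $i^{\ast}(B)\in\mathcal{A}=\langle S\rangle_{n+1}$, and since $i_{\ast}$ is exact, Lemma \ref{lem-Fexact} gives $i_{\ast}i^{\ast}(B)\in\langle i_{\ast}(S)\rangle_{n+1}$. Since $i^{\ast}$ exact forces $j_!$ exact by Lemma \ref{lem-rec}(5$'$), and $j^{\ast}(B)\in\mathcal{C}=\langle S'\rangle_{m+1}$, Lemma \ref{lem-Fexact} gives $j_!j^{\ast}(B)\in\langle j_!(S')\rangle_{m+1}$. Putting these into the $\mathbb{E}$-triangle and using $\langle j_!(S')\rangle_{m+1},\langle i_{\ast}(S)\rangle_{n+1}\subseteq\langle T\rangle_{m+1},\langle T\rangle_{n+1}$ respectively, together with Lemma \ref{lemma-oplus}, yields $B\in\langle j_!(S')\rangle_{m+1}\diamond\langle i_{\ast}(S)\rangle_{n+1}\subseteq\langle T\rangle_{m+1}\diamond\langle T\rangle_{n+1}\subseteq\langle T\rangle_{m+n+2}$. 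Hence $\mathcal{B}=\langle T\rangle_{m+n+2}$ and ${\rm ext}.\dim\mathcal{B}\leq m+n+1=({\rm ext}.\dim\mathcal{A})+({\rm ext}.\dim\mathcal{C})+1$.

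The main obstacle I anticipate is the bookkeeping around exactness: Lemma \ref{lem-Fexact} is stated only for exact functors, so I must be careful that the functors I feed it ($i_{\ast}$, $j^{\ast}$ are always exact, but $j_!$, $j_{\ast}$ are only right/left exact in general) are genuinely exact under the stated hypothesis — this is exactly why the hypothesis ``$i^{!}$ or $i^{\ast}$ is exact'' is needed, via Lemma \ref{lem-rec}(5) and (5$'$), and why only the correct one of (R4)/(R5) can be used in each case. A secondary point to check is that the $\mathbb{E}$-triangles in (R4)/(R5) as stated are four-term sequences, so I should pass to the genuine three-term $\mathbb{E}$-triangles provided by Lemma \ref{lem-rec}(6)/(6$'$) before applying the $\diamond$ estimates; once that reduction is made, the rest is a routine application of Lemmas \ref{lemma-oplus} and \ref{lem-Fexact}.
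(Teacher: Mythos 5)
Your proposal is correct and follows essentially the same route as the paper: the lower bound via Lemma \ref{lem-dense} applied to the exact dense functors $j^{\ast}$ and $i^{!}$ (or $i^{\ast}$), and the upper bound by decomposing each $B\in\mathcal{B}$ with the $\mathbb{E}$-triangle of Lemma \ref{lem-rec}(6)/(6$'$) and then applying Lemmas \ref{lem-Fexact} and \ref{lemma-oplus}. The only cosmetic difference is that you treat the $i^{\ast}$-exact case explicitly via (R5) while the paper writes out the $i^{!}$-exact case via (R4), each declaring the other case dual.
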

\begin{proof}
Assume that $i^{!}$ is exact.
By Definition \ref{def-rec} and Lemma \ref{lem-rec}, we know that
$i^{!}$ and $j^{*}$ are exact and dense.
Then, by Lemma \ref{lem-dense}, we have
$\max\{{\rm ext}.\dim \mathcal{A},{\rm ext}.\dim \mathcal{C}\} \leq {\rm ext}.\dim \mathcal{B}$.

Let ${\rm ext}.\dim \mathcal{A}=n$ and ${\rm ext}.\dim \mathcal{C}=m$. Then there exist $X\in\mathcal{A}$ and $Y\in\mathcal{C}$
such that $\mathcal{A}=\langle X\rangle_{n+1}$ and $\mathcal{C}=\langle Y\rangle_{m+1}$. Let $B\in\mathcal{B}$.
Since $i^{!}$ is exact by assumption, we have an $\mathbb{E}$-triangle
$$\xymatrix@C=15pt{i_{*}i^{!}(B)\ar[r]&B\ar[r]&
j_{*}j^{*}(B) \ar@{-->}[r]&}$$
in $\mathcal{B}$. Note that $j_{*}$ is exact by Lemma \ref{lem-rec} and
$i_{*}$ is exact by Definition \ref{def-rec}.
Since $i^{!}(B)\in \mathcal{A}=\langle X\rangle_{n+1}$ and $j^{*}(B)\in\mathcal{C}=\langle Y\rangle_{m+1}$,
we have $i_{*}i^{!}(B)\in i_{*}(\langle X\rangle_{n+1})\subseteq \langle i_{*}(X)\rangle_{n+1}$ and $j_{*}j^{*}(B)\in j_{*}(\langle Y\rangle_{m+1})\subseteq \langle j_{*}(Y)\rangle_{m+1}$
by Lemma \ref{lem-Fexact}.
Thus $B\in \langle i_{*}(X)\rangle_{n+1}\diamond \langle j_{*}(Y)\rangle_{m+1} \subseteq
\langle i_{*}(X)\oplus j_{*}(Y)\rangle_{n+m+2}$ by Lemma \ref{lemma-oplus},
and therefore
${\rm ext}.\dim \mathcal{B}\leq n+m+1$.

For the case that $i^{*}$ is exact, the argument is similar.
\end{proof}

\begin{corollary}{\rm(\cite[Theorem 7.4]{PC14H})}
Let $(\mathcal{A},\mathcal{B},\mathcal{C})$ be a recollement of triangulated categories. Then
$$\max\{{\rm ext}.\dim \mathcal{A},{\rm ext}.\dim \mathcal{C}\} \leq {\rm ext}.\dim \mathcal{B}\leq {\rm ext}.\dim \mathcal{A} +{\rm ext}.\dim \mathcal{C}+1.$$
\end{corollary}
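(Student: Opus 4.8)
The plan is to treat the final corollary as the special case of the preceding theorem in which all three extriangulated categories are triangulated. First I would recall that a recollement of triangulated categories is in particular a recollement of extriangulated categories, via the fact (stated in the remarks following Definition~\ref{exact functor}) that for triangulated categories the notions of left exact, right exact, and exact functor all coincide with ``triangle functor.'' Hence every one of the six functors $i^*,i_*,i^!,j_!,j^*,j_*$ in a triangulated recollement is an exact functor of extriangulated categories; in particular $i^!$ (indeed also $i^*$) is exact, so the hypothesis ``either $i^!$ or $i^*$ is exact'' of the Theorem is automatically satisfied.

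The second step is simply to invoke the Theorem: once we know $(\mathcal{A},\mathcal{B},\mathcal{C})$ is a recollement of extriangulated categories with $i^!$ exact, the chain of inequalities
$$\max\{{\rm ext}.\dim \mathcal{A},{\rm ext}.\dim \mathcal{C}\} \leq {\rm ext}.\dim \mathcal{B}\leq {\rm ext}.\dim \mathcal{A} +{\rm ext}.\dim \mathcal{C}+1$$
is exactly the conclusion of the Theorem applied verbatim. So the corollary needs no independent argument beyond the verification that the extriangulated framework specializes correctly.

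The only genuine point requiring care — the place I would expect to be the main (and really the sole) obstacle — is confirming that the ambient standing assumptions of the paper are met in the triangulated setting, namely that the categories are Krull--Schmidt Hom-finite $k$-linear and, more importantly, that they ``admit enough projective objects and injective objects'' in the extriangulated sense. As noted in Example after the definition of global dimension (part (2)) and in Remark~\ref{proj-coincide}, a triangulated category, viewed with the split proper class, has $\mathcal{P}(\mathcal{C})$ equal to the zero objects and trivially has enough projectives and injectives, so Condition~(WIC) and the enough-(co)projectives hypothesis hold vacuously; the extension dimension, defined via the $\diamond$ operation on $\mathbb{E}$-triangles, then matches the dimension of triangulated categories of~\cite{RR08D}, so the statement of the corollary is the intended one. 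I would therefore write the proof as: ``A recollement of triangulated categories is a recollement of extriangulated categories in which $i^!$ (and $i^*$) is exact, so the result is immediate from Theorem~4.\ref{...}.''
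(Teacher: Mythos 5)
Your proposal is correct and matches the paper's (implicit) argument: the corollary is obtained by specializing the preceding theorem, noting that in a recollement of triangulated categories all six functors are triangle functors, hence exact as extriangulated functors, so in particular $i^{!}$ is exact and the theorem applies directly. Your additional check that the standing hypotheses (enough projectives and injectives, with $\mathcal{P}(\mathcal{B})$ consisting of zero objects) hold vacuously in the triangulated case is exactly the right point of care and is consistent with the paper's Example on global dimension, part (2).
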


\begin{corollary}{\rm(\cite[Theorem 5.5]{ZJL})}
Let $(\mathcal{A},\mathcal{B},\mathcal{C})$ be a recollement of abelian categories. If either $i^{!}$ or $i^{*}$
is exact, then
$$\max\{{\rm ext}.\dim \mathcal{A},{\rm ext}.\dim \mathcal{C}\} \leq {\rm ext}.\dim \mathcal{B}\leq {\rm ext}.\dim \mathcal{A} +{\rm ext}.\dim \mathcal{C}+1.$$
\end{corollary}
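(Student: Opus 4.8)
The plan is to derive this corollary directly from the preceding Theorem, the only real content being to present a recollement of abelian categories as a special recollement of extriangulated categories and to match up the two notions of extension dimension. First I would regard each of the abelian categories $\mathcal{A}$, $\mathcal{B}$, $\mathcal{C}$ as an extriangulated category via its canonical extriangulated structure, in which $\mathbb{E}=\mathrm{Ext}^{1}(-,-)$ and $\mathfrak{s}$ sends a class in $\mathrm{Ext}^{1}$ to the equivalence class of the corresponding short exact sequence; with these choices the $\mathbb{E}$-triangles are precisely the short exact sequences. Then $(\mathcal{A},\mathcal{B},\mathcal{C})$ is a recollement of extriangulated categories in the sense of Definition \ref{def-rec}: this is exactly Example \ref{exam-rec-abelian} with the extension-closed subcategory chosen to be all of $\mathcal{C}$, and it can also be verified directly from Definition \ref{def-rec}, using that $i_{\ast}$ and $j^{\ast}$ are exact, $i^{\ast}$ and $j_{!}$ right exact, and $i^{!}$ and $j_{\ast}$ left exact in the usual abelian sense (hence in the extriangulated sense, by \cite{WWZ20R}), that every morphism in an abelian category is compatible, and that the $4$-term $\mathbb{E}$-triangle sequences required by (R4) and (R5) are precisely the familiar $4$-term exact sequences $0\to i_{\ast}i^{!}(B)\to B\to j_{\ast}j^{\ast}(B)\to i_{\ast}(A)\to 0$ and $0\to i_{\ast}(A')\to j_{!}j^{\ast}(B)\to B\to i_{\ast}i^{\ast}(B)\to 0$ attached to a recollement of abelian categories.

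Next I would note that, since the $\mathbb{E}$-triangles in an abelian category coincide with its short exact sequences, for each object $T$ the subcategory $\langle T\rangle_{n}$ of Section 4 agrees with the subcategory defined by the same recursion using short exact sequences in place of $\mathbb{E}$-triangles. Hence the extension dimension ${\rm ext}.\dim$ of each of $\mathcal{A}$, $\mathcal{B}$, $\mathcal{C}$ in the sense of the present paper equals its extension dimension as an abelian category in the sense of \cite{BA08S,DHL,ZJL}. Moreover, the hypothesis that $i^{!}$ (respectively $i^{\ast}$) is exact as a functor of abelian categories is precisely the statement that it is exact as a functor of extriangulated categories, so the hypothesis of the preceding Theorem is satisfied.

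Finally I would apply the preceding Theorem to the recollement $(\mathcal{A},\mathcal{B},\mathcal{C})$ of extriangulated categories, obtaining
$$\max\{{\rm ext}.\dim\mathcal{A},{\rm ext}.\dim\mathcal{C}\}\leq{\rm ext}.\dim\mathcal{B}\leq{\rm ext}.\dim\mathcal{A}+{\rm ext}.\dim\mathcal{C}+1,$$
where, by the previous paragraph, each of these extension dimensions is that of the corresponding abelian category; this is the assertion. I expect no genuine obstacle here: the one point that deserves care is the identification, in the second step, of the extriangulated extension dimension with the classical abelian one, and this is immediate once one observes that $\mathbb{E}$-triangles are exactly short exact sequences, so the two candidate definitions of $\langle T\rangle_{n}$ literally coincide.
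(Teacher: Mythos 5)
Your proposal is correct and follows essentially the same route the paper intends: the corollary is obtained by viewing the abelian recollement as a recollement of extriangulated categories (Example \ref{exam-rec-abelian} with $\widetilde{\mathcal{X}}=\mathcal{C}$, with (WIC) and compatibility automatic in the abelian setting) and applying the preceding theorem, after observing that $\mathbb{E}$-triangles are short exact sequences so the two notions of $\langle T\rangle_{n}$ and hence of extension dimension coincide. No gaps; your careful matching of the definitions is exactly the point the paper leaves implicit.
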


\section{Examples}

Let $\Lambda', \Lambda''$ be artin algebras and $_{\Lambda'}M_{\Lambda''}$ an $(\Lambda',\Lambda'')$-bimodule, and let $\Lambda={\Lambda'\ {M}\choose\  0\  \ \Lambda''}$ be a triangular matrix algebra.
Then any module in $\mod \Lambda$ can be uniquely written as a triple ${X\choose Y}_{f}$ with $X\in\mod \Lambda'$, $Y\in\mod \Lambda''$
and $f\in\Hom_{\Lambda'}(M\otimes_{\Lambda''}Y,X)$ (see \cite[p.76]{AMRISSO95R} for more details).

 Let $\Lambda'$ be a finite dimensional algebra given by the quiver $\xymatrix@C=15pt{1\ar[r]&2}$ and $\Lambda''$ be a finite dimensional algebra given by the quiver $\xymatrix@C=15pt{3\ar[r]^{\alpha}&4\ar[r]^{\beta}&5}$ with the relation $\beta\alpha=0$. Define a triangular matrix algebra $\Lambda={\Lambda'\ \Lambda'\choose \ 0\ \ \Lambda''}$, where the right $\Lambda''$-module structure on $\Lambda'$ is induced by the unique algebra surjective homomorphsim $\xymatrix@C=15pt{\Lambda''\ar[r]^{\phi}&\Lambda'}$ satisfying $\phi(e_{3})=e_{1}$, $\phi(e_{4})=e_{2}$, $\phi(e_{5})=0$.  Then $\Lambda$ is
a finite dimensional algebra given by the quiver
$$\xymatrix@C=10pt@R=15pt{&\cdot\\
\cdot\ar[ru]^{\delta}&&\ar[lu]_{\gamma}\cdot\ar[rr]^-{\beta}&&\cdot\\
&\ar[lu]^{\epsilon}\cdot\ar[ru]_{\alpha}}$$
with the relations $\gamma\alpha=\delta\epsilon$ and $\beta\alpha=0$. The Auslander-Reiten quiver of $\Lambda$ is
$$\xymatrix@C=10pt@R=12pt{{0\choose P_5}\ar[rd]&&{S_2\choose S_4}\ar[rd]&&{S_1\choose 0}\ar[rd]&&0\choose P_3\ar[rd]\\
&{S_2\choose P_4}\ar[ru]\ar[rd]&&P_1\choose S_4\ar[ru]\ar[r]\ar[rd]&P_1\choose P_3\ar[r]&S_1\choose P_3\ar[ru]\ar[rd]&&{0\choose S_3}.\\
S_2\choose 0\ar[ru]\ar[rd]&&P_1\choose P_4\ar[ru]\ar[rd]&&0\choose S_4\ar[ru]&&S_1\choose S_3\ar[ru]\\
&P_1\choose 0\ar[ru]&&0\choose P_4\ar[ru]}$$
By \cite[Example 2.12]{PC14H},
$$\xymatrix{\mod \Lambda'\ar[rr]!R|-{i_{*}}&&\ar@<-2ex>[ll]!R|-{i^{*}}
\ar@<2ex>[ll]!R|-{i^{!}}\mod \Lambda
\ar[rr]!L|-{j^{*}}&&\ar@<-2ex>[ll]!R|-{j_{!}}\ar@<2ex>[ll]!R|-{j_{*}}
\mod \Lambda''}$$
is a recollement of module categories, where
\begin{equation}\label{equation-exam}
\begin{aligned}
&i^{*}({X\choose Y}_{f})=\Coker f,\ \ \ \ \ \ \  && i_{*}(X)={X\choose 0},\ \ \ \ \ \ \  &&i^{!}({X\choose Y}_{f})=X,\\
&j_{!}(Y)={M\otimes_{\Lambda''} Y\choose Y}_{1},\ \ \ \ \ \  \ & & j^{*}({X\choose Y}_{f})=Y,\ \ \ \ \ \ \ \  &&j_{*}(Y)={0\choose Y}.
\end{aligned}
\end{equation}
\begin{itemize}
\item[(1)] Take $\widetilde{\mathcal{X}}=\add (P_3\oplus S_3)$.
Then by Example \ref{exam-rec-abelian}, $(\mod\Lambda',\mathcal{X},\widetilde{\mathcal{X}})$ is a recollement of extriangulated categories, where $\mathcal{X}=\add({S_{2}\choose 0}\oplus{P_{1}\choose 0}\oplus{S_1\choose 0}\oplus{P_1\choose P_3}\oplus{S_1\choose P_3}\oplus{S_1\choose S_3}\oplus{0\choose P_3}\oplus {0\choose S_3})$.
Notice that $\gl \mod \Lambda=1$ and $\gl \widetilde{\mathcal{X}}=0$. By Theorem \ref{main-gl}, we have $0\leq\gl\mathcal{X}\leq 2$, in fact $\gl \mathcal{X}=2$.

\item[(2)] Take $\widetilde{\mathcal{X}}=\add (P_5\oplus P_4\oplus S_4)$.
Then by Example \ref{exam-rec-abelian}, $(\mod\Lambda',\mathcal{X},\widetilde{\mathcal{X}})$ is a recollement of extriangulated categories, where $\mathcal{X}=\add({0\choose P_5}\oplus{S_{2}\choose 0}\oplus{P_{1}\choose 0}\oplus{S_2\choose P_4}\oplus{P_1\choose P_4}\oplus{S_2\choose S_4}\oplus{P_1\choose S_4}\oplus{0\choose P_4}\oplus{0\choose S_4}\oplus {S_1\choose 0})$.
Notice that $\gl \mod \Lambda=1$ and $\gl \widetilde{\mathcal{X}}=1$. By Theorem \ref{main-gl}, we have $1\leq\gl\mathcal{X}\leq 3$, in fact $\gl \mathcal{X}=1$.

\item[(3)] Take $\widetilde{\mathcal{X}}=\add (S_4\oplus P_3\oplus S_3)$.
Then by Example \ref{exam-rec-abelian}, $(\mod\Lambda',\mathcal{X},\widetilde{\mathcal{X}})$ is a recollement of extriangulated categories, where $\mathcal{X}=\add({S_{2}\choose 0}\oplus{P_{1}\choose 0}\oplus{S_2\choose S_4}\oplus{P_1\choose S_4}\oplus {S_1\choose 0}\oplus {P_1\choose P_3}\oplus{0\choose S_4}\oplus{S_1\choose P_3}\oplus{S_1\choose S_3}\oplus{0\choose P_3}\oplus {0\choose S_3})$.
Notice that $\gl \mod \Lambda=1$ and $\gl \widetilde{\mathcal{X}}=1$. By Theorem \ref{main-gl}, we have $1\leq \gl\mathcal{X}\leq 3$, in fact $\gl \mathcal{X}=2$.

\item[(4)] Take $\widetilde{\mathcal{X}}=\add (S_3)$.
Then by Example \ref{exam-rec-abelian}, $(\mod\Lambda',\mathcal{X},\widetilde{\mathcal{X}})$ is a recollement of extriangulated categories, where $\mathcal{X}=\add({S_{2}\choose 0}\oplus{P_{1}\choose 0}\oplus{S_1 \choose 0}\oplus{S_1\choose S_3}\oplus {0\choose S_3})$.
Notice that $\gl \mod \Lambda=1$ and $\gl \widetilde{\mathcal{X}}=0$. By Theorem \ref{main-gl}, we have $0\leq \gl\mathcal{X}\leq 2$, in fact $\gl \mathcal{X}=2$.

\end{itemize}


\vspace{0.5cm}
\textbf{Acknowledgement}.
This work was supported by NSFC (12001168),
Henan University of Engineering (DKJ2019010), the Key Research Project of Education Department of Henan Province (21A110006), the project ZR2019QA015 supported by Shandong Provincial Natural Science Foundation.


\begin{thebibliography}{99}


\bibitem{AMRISSO95R}
M.~Auslander, I.~Reiten, and S.~O. Smal\o,
 \emph{{Representation theory of {A}rtin algebras}}, {
  Cambridge Studies in Adv. Math.} 36,
 Cambridge University Press, Cambridge, 1995.

\bibitem{BBD}
A.~A. Be{\u\i}linson, J. Bernstein, P. Deligne,
 Faisceaux pervers,
 in: Analysis and topology on singular spaces, {I}, Luminy,
  1981, {Ast\'erisque}, {vol. 100}, Soc. Math. France,
  Paris, 1982, 5--171.

\bibitem{B00R} A. Beligiannis, {\it Relative homological algebra and purity in triangulated categories}, J. Algebra {\bf 227} (2000), 268--361.


\bibitem{BARI07H}
A.~Beligiannis and I.~Reiten, Homological and Homotopical Aspects of Torsion Theories,
Mem. Amer. Math. Soc. {\bf 188}, Amer. Math. Soc., Providence, RI, 2007.


\bibitem{BA08S}
A.~Beligiannis,
\newblock \emph{Some ghost lemmas, survey for `The representation dimension of artin algebras'}, Bielefeld 2008,
http://www.mathematik.uni-bielefeld.de/\verb"~"sek/2008/ghosts.pdf.


\bibitem{Ben} R. Bennett-Tennenhaus, A. Shah, {\it Transport of structure in higher homological algebra}, Arxiv: 2003.02254.


\bibitem{Bu} T. B\"{u}hler, {\it Exact categories}, Expo. Math. {\bf 28} (2010), 1--69.


\bibitem{CEPBSL86D}
E.~Cline, B.~Parshall, and L.~Scott, {\it Derived categories and Morita theory}, J. Algebra {\bf 104} (1986), 397--409.


\bibitem{CEPBSL88F}
E.~Cline, B.~Parshall, and L.~Scott, {\it Finite-dimensional algebras and highest weight categories}, J. Reine Angew. Math.
{\bf 391} (1988), 85--99.

\bibitem{DHL}
H.~Dao and R.~Takahashi,
\newblock \emph{The radius of a subcategory of modules},
\newblock Algebra Number Theory \textbf{8} (2014), 141--172.

\bibitem{Fr} V. Franjou, T. Pirashvili, {\it Comparison of abelian categories recollements}, Doc. Math. {\bf 9} (2004), 41--56.


\bibitem{Ha1} D. Happel, {Triangulated categories in the representation theory of finite-dimensional algebras}. London Mathematical Society Lecture Note Series, \textbf{119}. Cambridge University Press, Cambridge, 1988. x+208 pp.


\bibitem{HZZ20P} J. Hu, D. Zhou, P. Zhou, {\it Proper classes and Gorensteinness in extriangulated categories},
J. Algebra {\bf 551}(2020), 23--60.

\bibitem{HYG} Y. Hu, H. Yao, {\it Relative homological dimensions in recollements of triangulated categories}. Front. Math. China {\bf 14} (2019), 25--43.

\bibitem{INY18A} O. Iyama, H. Nakaoka, Y. Palu, {\it Auslander-Reiten theory in extriangulated categories}, Arxiv: 1805.03776.


\bibitem{KNJ94G}
N.~J. Kuhn, {\it Generic representations of the finite general linear groups and the Steenrod algebra II},
$K$-Theory {\bf 8} (1994), 395--428.







\bibitem{Na} H. Nakaoka, Y. Palu, {\it Extriangulated categories, Hovey twin cotorsion pairs and model structures}, Cah. Topol. G\'{e}om. Diff\'{e}r. Cat\'{e}g. {\bf 60}(2) (2019), 117--193.

\bibitem{PYY} Y. Peng, T. Zhao, {\it On the extension dimension of module categories}, J. Korean Math. Soc. {\bf 57}(6) (2020), 1389--1406.


\bibitem{PC14H} C. Psaroudakis, {\it Homological theory of recollements of abelian categories}, J. Algebra {\bf 398} (2014), 63--110.



\bibitem{RR08D}
R.~Rouquier,
\newblock \emph{Dimensions of triangulated categories},
\newblock J. K-Theory \textbf{1} (2008), 193--256.


\bibitem{WWZ20R} L. Wang, J. Wei, H. Zhang, {\it Recollements of extriangulated categories}, Arxiv: 2012.03258v1.




 \bibitem{XCC06A} C. Xi, {\it Adjoint functors and representation dimensions}, Acta Math. Sin. (Engl. Ser.) {\bf 22} (2006), 625--640.

\bibitem{ZHJ} H. Zhang, X. Zhu, {\it Gorenstein global dimension of recollements of abelian categories}. Comm. Algebra {\bf 391} (2020), 467--483.

\bibitem{ZJL}
J. Zheng, X. Ma, Z. Huang,
\newblock {\it The extension dimension of abelian categories}. { Algebr. Represent. Theory} {\bf 23}(3) (2020), 693--713.

\bibitem{ZZ20T} B. Zhu, X. Zhuang, {\it Tilting subcategories in extriangulated categories}, Front. Math. China {\bf 15}(1) (2020), 225--253.

\end{thebibliography}
\end{document}